%
%
%
%
\documentclass{amsart}

\usepackage{graphicx}
\usepackage{subcaption}

\newtheorem{theorem}{Theorem}[section]
\newtheorem{corollary}[theorem]{Corollary}
\newtheorem{lemma}[theorem]{Lemma}

\theoremstyle{definition}
\newtheorem{definition}[theorem]{Definition}

\theoremstyle{remark}

\numberwithin{equation}{section}



\begin{document}
\title{DNA Origami and Unknotted A-trails in Torus Graphs}

\author[Morse]{Ada Morse}

\address{Dept. of Mathematics and Statistics, University of Vermont, Burlington, VT 05405}

\author[Adkisson]{William Adkisson}

\address{Dept. of Mathematics, University of Chicago, Chicago IL, 60637}

\author[Greene, Perry, Smith, Ellis-Monaghan]{Jessica Greene, David Perry, Brenna Smith,  Jo Ellis-Monaghan}

\address{Dept. of Mathematics, Saint Michael’s College, Colchester VT, 05439}

\author[Pangborn]{Greta Pangborn}

\address{Dept. of Computer Science, Saint Michael’s College, Colchester VT, 05439}

\begin{abstract}
Motivated by the problem of determining unknotted routes for the scaffolding strand in DNA origami self-assembly, we examine existence and knottedness of A-trails in graphs embedded on the torus. We show that any A-trail in a checkerboard-colorable torus graph is unknotted and characterize the existence of A-trails in checkerboard-colorable torus graphs in terms of pairs of quasitrees in associated embeddings. Surface meshes are frequent targets for DNA nanostructure self-assembly, and so we study both triangular and rectangular torus grids. We show that, aside from one exceptional family, a triangular torus grid contains an A-trail if and only if it has an odd number of vertices, and that such an A-trail is necessarily unknotted. On the other hand, while every rectangular torus grid contains an unknotted A-trail, we also show that any torus knot can be realized as an A-trail in some rectangular grid. Lastly, we use a gluing operation to construct infinite families of triangular and rectangular grids containing unknotted A-trails on surfaces of arbitrary genus. We also give infinite families of triangular grids containing no unknotted A-trail on surfaces of arbitrary nonzero genus.
\end{abstract}

\keywords{Eulerian circuits, A-trails, DNA, Knots, Knotted graphs}

\subjclass[2000]{Primary 05C10, 57M25, 05C45, 05C62}

\maketitle

\section{Introduction}

We study a new application of knot theory arising from the context of DNA nanostructure self-assembly. There has been increasing interest recently in the DNA origami method \cite{Rot05, Rot06} for self-assembly of structures that can be modeled by embedded graphs, i.e. polyhedral skeletons and spherical triangulations \cite{CS91}  \cite{AD+09}  \cite{GCXS09} \cite{TVNYG11}  \cite{IKJSWY14} \cite{DNA15}. The origami method uses a single circular (unknotted) strand of DNA called the \emph{scaffolding strand}. This strand is paired with a collection of short \emph{staple strands}. Each staple strand has bases chosen complementary to certain intervals along the scaffolding strand. By bonding to these intervals, the staple strands fold the scaffolding strand into the target structure.

Given a particular target structure, designs using the origami method require the identification of a route through the structure for the scaffolding strand to follow. Since the scaffolding strand is an unknot, these routes must be unknotted. Herein lies the connection between knot theory and DNA self-assembly. 

Because of the way DNA strands stack, it is preferable that neither scaffolding nor staple-strands interweave, that is, cross over one another. Thus, when the target structure is modeled as a graph embedded on a surface in $3$-space, optimal routes correspond to \emph{A-trails} -- Eulerian circuits that turn either left or right at each vertex (see e.g. \cite{DNA15}). See Figure \ref{fig:atrailex} for an example of this design proces, and note that given an A-trail, there is a natural placement of staple strands avoiding interweaving. If the surface is a sphere, as in \cite{DNA15}, all A-trails in the target structure are necessarily unknotted, and so may be used as routes for the unknotted scaffolding strand. On higher-genus surfaces such as the torus, this is no longer the case \cite{EMPSBDFHMSW}.

\begin{figure}
    \centering
     \begin{subfigure}[b]{0.5\columnwidth}
     \centering
    \includegraphics[scale=.2]{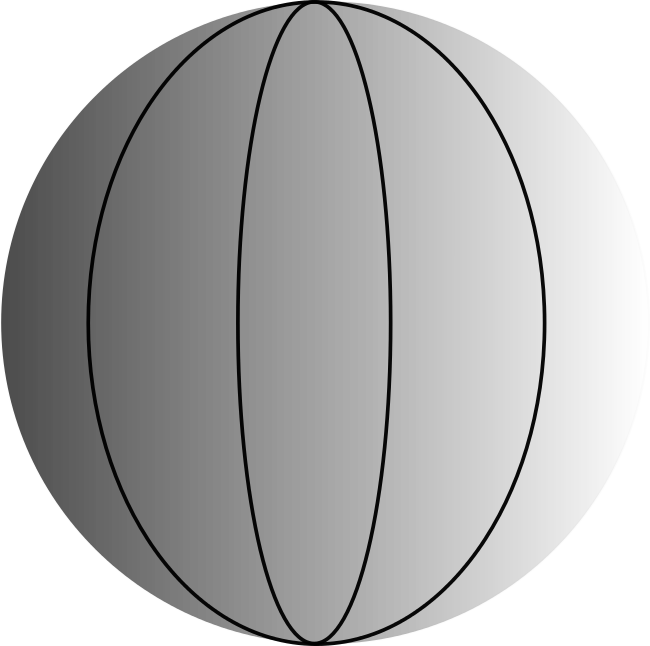}
    \caption{The target structure on a sphere.}
    \label{subfig:target}
    \end{subfigure}%
    \begin{subfigure}[b]{0.5\columnwidth}
    \centering
    \includegraphics[scale=.2]{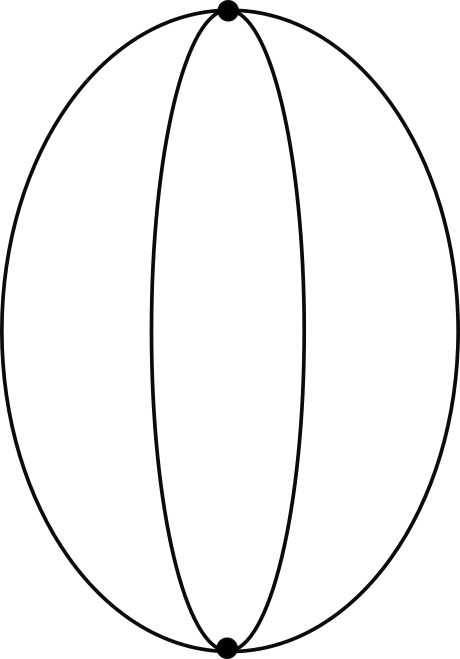}
    \caption{Modeling the target as a (plane) graph.}
    \label{subfig:target}
    \end{subfigure}
    \begin{subfigure}[b]{0.5\columnwidth}
    \centering
    \includegraphics[scale=.2]{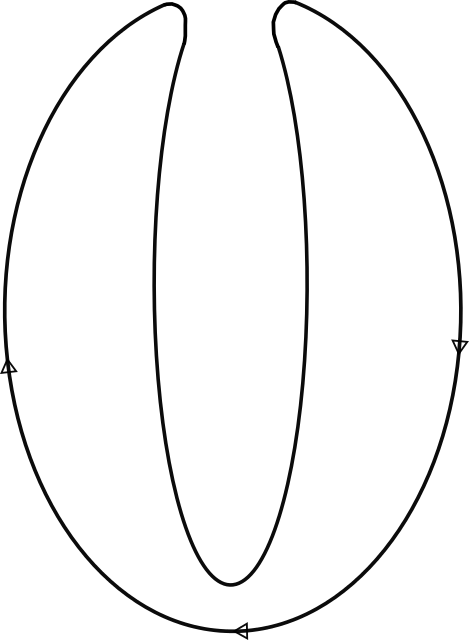}
    \caption{An A-trail in the model.}
    \label{subfig:atrail}
    \end{subfigure}%
    \begin{subfigure}[b]{0.5\columnwidth}
    \centering
    \includegraphics[scale=.2]{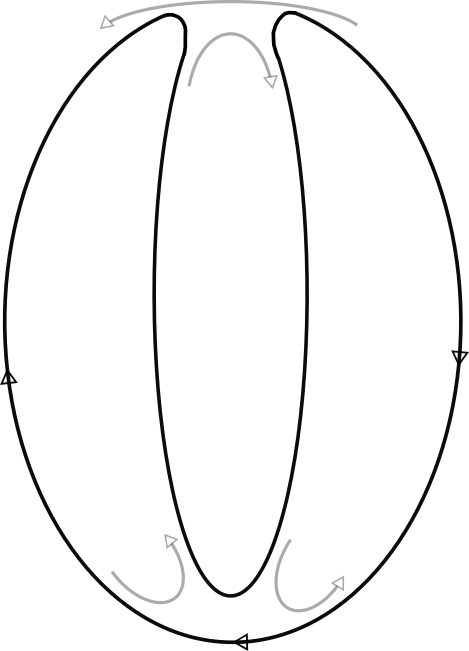}
    \caption{Placing the staple strands.}
    \label{subfig:target}
    \end{subfigure}
    \caption{The origami design process using A-trails.}
    \label{fig:atrailex}
\end{figure}

The existence of A-trails has been studied almost exclusively in the case of plane graphs, in particular plane 4-regular graphs. Determining if a particular plane graph has an A-trail is NP-hard in general, however, in \cite{K68} Kotzig showed that every 4-regular plane graph has an A-trail, and sufficient conditions were discovered for the existence of A-trails in $2$-connected plane graphs in \cite{ANDERSEN199899}. In addition to the plane, Andersen et al have characterized existence of orthogonal pairs of  A-trails in checkerboard-colorable 4-regular graphs on the torus and projective plane \cite{ANDERSEN1996232}. The connection between knotted A-trails and DNA origami self-assembly design was originally discussed, and the existence of knotted A-trails demonstrated, in \cite{EMPSBDFHMSW}.

In this paper, we study the existence and knottedness of A-trails in graphs embedded on the torus. We begin by studying checkerboard-colorable embeddings, i.e. embeddings whose faces can be properly $2$-colored, as these colorings have often been exploited in the past to study A-trails both on the plane (where every Eulerian embedding is  checkerboard-colorable) \cite{ANDERSEN199899} and on the torus (where not every Eulerian embedding is checkerboard-colorable) \cite{ANDERSEN1996232}. We show that any A-trail in a checkerboard-colorable toroidal graph is unknotted. However, we also exhibit an infinite family of checkerboard-colorable toroidal graphs containing no A-trails. In the process of proving this result, we construct a bijection between A-trails in a checkerboard-colorable toroidal graph and certain pairs of quasitrees in associated embeddings. We note that Andersen and Bouchet have proven a similar result for orthogonal pairs of A-trails in the $4$-regular case \cite{ANDERSEN1996232}.

Surface meshes are of particular interest for the application in nanostructure self-assembly \cite{DNA15}. On the torus, these are the $6$-regular triangular grids and the $4$-regular rectangular grids. We begin by analyzing triangular torus grids. Using Altshuler's construction of all regular torus triangulations \cite{ALTSHULER1973201}, we show that triangular torus grids are checkerboard-colorable. Hence, using results from the first section, we prove that a triangular torus grid with a non-Hamiltonian straight-ahead walk contains an A-trail if and only if it has an odd number of vertices, and in that case every A-trail is unknotted. This result yields an infinite family of triangular torus grids with unknotted A-trails as well as an infinite family of triangular torus grids with no A-trails (in particular, without unknotted A-trails.)

We also analyze rectangular torus grids. Unlike the triangular grids, these may contain knotted A-trails. Indeed, we provide an example of a rectangular torus grid containing precisely two A-trails, each of which is knotted. Restricting to a naturally-defined family of rectangular torus grids, we show that each grid contains unknotted A-trails as well as, in many cases, knotted A-trails and non-trivially linked circuit decompositions. As a consequence, we show that every torus knot (indeed, link) can be constructed from an A-trail (smooth circuit decomposition) of some rectangular grid.

While our main results are specific to the torus, we close by considering composites of triangular and rectangular torus grids on higher-genus surfaces. To do so, we define and study a connected sum for circuit decompositions, which allows certain results on the torus to be lifted to the $n$-torus. In particular, we construct infinite families of composite triangular and rectangular grids containing unknotted A-trails on any $n$-torus. We close by briefly discussing the types of knots obtainable from composites of rectangular grids.

While this paper was motivated by practical application, we believe the study of links formed from circuit decompositions of embedded graphs is of independent mathematical interest as it provides a new perspective on knotted and linked structures in (Eulerian) embedded graphs.

\section{Definitions}

\subsection{Graph theory.} We first recall some standard terminology and definitions from graph theory. A \emph{graph} $G$ consists of a set $V(G)$ of vertices together with a set $E(G)$ of edges connecting vertices.  We allow multiple edges between the same two vertices, as well as edges from a vertex to itself (\emph{loops}.) Graphs with neither multiple edges nor loops are called \emph{simple}. Two vertices are \emph{adjacent} if there is an edge between them, in which case they are that edge's \emph{endpoints}. We think of each edge having two half-edges, one each incident to its endpoints. The number of half-edges incident to a vertex is called the \emph{degree} of $v$ and is denoted $\deg_G(v)$ (note that both half-edges of a loop are counted). The graph is $k$-regular if the degree of each vertex is $k$. A \emph{walk} in $G$ is a sequence $v_1e_1v_2e_2 \cdots v_k e_k v_{k+1}$ of vertices $v_i$ and edges $e_i$ such that for each $i$ the vertices $v_i$ and $v_{i+1}$ are the endpoints of the edge $e_i$.  If $v_{k+1} = v_1$, the walk is \emph{closed}. A graph is \emph{connected} if for each pair $v$ and $w$ of vertices there is a walk beginning at $v$ and ending at $w$, and a \emph{connected component} of a graph is a maximal connected subgraph. A \emph{circuit} is a closed walk repeating no edges, and a \emph{cycle} is a closed walk repeating no vertices (and hence also repeating no edges.) The \emph{length} of a cycle is the number of edges it contains. The \emph{n-cycle} $C_n$ is the graph consisting only of a cycle on $n$ vertices. An \emph{Eulerian circuit} is a circuit containing every edge of $G$, and a graph with an Eulerian circuit is \emph{Eulerian}. Recall the standard result that a graph is Eulerian if and only if it contains a most one nontrivial connected component and every vertex has even degree. A \emph{circuit decomposition} of an Eulerian graph is a collection of edge-disjoint circuits $C$ such that every edge is contained in an element of $C$.

Since the targets of DNA nanostructure self-assembly are typically geometric structures in $\mathbb{R}^3$, we will be concerned with graphs embedded in $\mathbb{R}^3$. In particular, following the constructions of triangulations of a sphere using DNA in \cite{DNA15}, we will be interested in grid-like graphs on orientable surfaces. A \emph{cellular embedding} of a graph $G$ on a surface $\Sigma$ is a drawing $\Gamma$ of $G$ on $\Sigma$ such that edges meet only at vertices, and $\Sigma \setminus \Gamma$ is the disjoint union of open disks, called \emph{faces}. Given a face $f$, we denote by $\partial f$ its topological boundary. Note that $\partial f$ always consists of a closed walk in $G$. When this walk is a cycle, we will use $\partial f$ to refer also to this cycle as a graph. The \emph{geometric dual} of $G$ is the cellularly embedded graph $G^*$ obtained by placing a vertex in each face of the embedding of $G$, and drawing edges connecting these vertices, when their associated faces share an edge, through that edge. A \emph{thickened graph} is obtained from $G$ by thickening vertices to disks and edges to ribbons (so $G$ may be obtained by contracting the vertices of the thickened graph to points and the edges of the thickened graph to lines.) A graph is a \emph{quasitree} if it has a single boundary component, viewed as a thickened graph in $\Sigma$. We refer to reader to \cite{Tucker} for details on thickened graphs. Lastly, recall that a cellular embedding of $G$ on a surface $\Sigma$ induces, at each vertex, a \emph{cyclic rotation of half-edges} obtained by listing the half-edges incident to $v$ cyclically in counter-clockwise order. 

\subsection{Knot theory.} We also establish some standard knot theoretical definitions, following conventions of \cite{BHM2014,Lickorish}. A \emph{knot} is a simple closed curve in $\mathbb{R}^3$, and a \emph{link} is a collection of pairwise disjoint knots, called its \emph{components}. We consider links to be equivalent under isotopy. The unknot, or trivial knot, is the knot having a plane projection containing no crossings. A simple closed curve is \emph{knotted} if it is a nontrivial knot and \emph{unknotted} otherwise. A link $L$ is \emph{trivial} if for each component $K$ there is a solid sphere $S$ so that $S \cap L = K$. Knots are \emph{unlinked} if they form a trivial link and \emph{linked} otherwise. Let $K$ be an oriented knot whose intersection with a plane $E$ consists of two points $p$ and $q$. The arc of $K$ from $p$ to $q$ is closed by an arc in $E$ to obtain a knot $K_1$ and the arc of $K$ from $q$ to $p$ is likewise closed to form a knot $K_2$. Then $K$ is the \emph{connected sum} of $K_1$ and $K_2$, and we write $K = K_1 \# K_2$. A knot $K$ that is the connected sum of two nontrivial knots is a \emph{composite knot}, and otherwise is \emph{prime}. Given two unlinked knots $K_1$ and $K_2$, we can always form their connected sum, and the result is independent of the choice of plane and representatives. It can be shown that the unknot is the additive identity for the connected sum of knots, i.e. if $K_0$ is the unknot, then $K_0 \# K = K$ and $K \# K_0 = K$ for all knots $K$. 

\subsection{Assembly, transitions, and A-trails.} Constructing a graph $G$ embedded on a surface from DNA using the origami method requires the identification of a feasible route for the scaffolding strand through the structure. In particular, one must identify an Eulerian circuit in the graph subject to certain turning restrictions: at each vertex in the surface, no strand of the DNA can cross over another. We use transition systems and A-trails to model this constraint. Begin by fixing an Eulerian graph $G$ embedded on a surface $\Sigma$.

\begin{definition} (transition, smoothing)
 Let $v$ be a vertex of $G$. A \textit{transition} at $v$ is a partition of the half-edges incident to $v$ into pairs. A transition $T$ at $v$ is \emph{smooth} if $T$ only pairs half-edges adjacent in the cyclic order at $v$ given by the embedding of $G$.
The embedded graph $G'$ created by performing a \textit{smoothing} on $G$ at $v$ according to a smooth transition $T$ is identical to $G$ except in a neighborhood of $v$ containing only $v$ and (portions of) its incident half-edges. In this neighborhood we delete $v$ and connect the half-edges paired by $T$ in the manner shown by Figure \ref{fig:smoothing}.
\end{definition}

Any route for the scaffold strand at a vertex must be topologically equivalent to the curves obtained by smoothing at that vertex according to a smooth transition. To construct the graph, we will need such a choice of transition at all vertices.

\begin{figure}[ht]
    \centering
    \includegraphics[scale=.1]{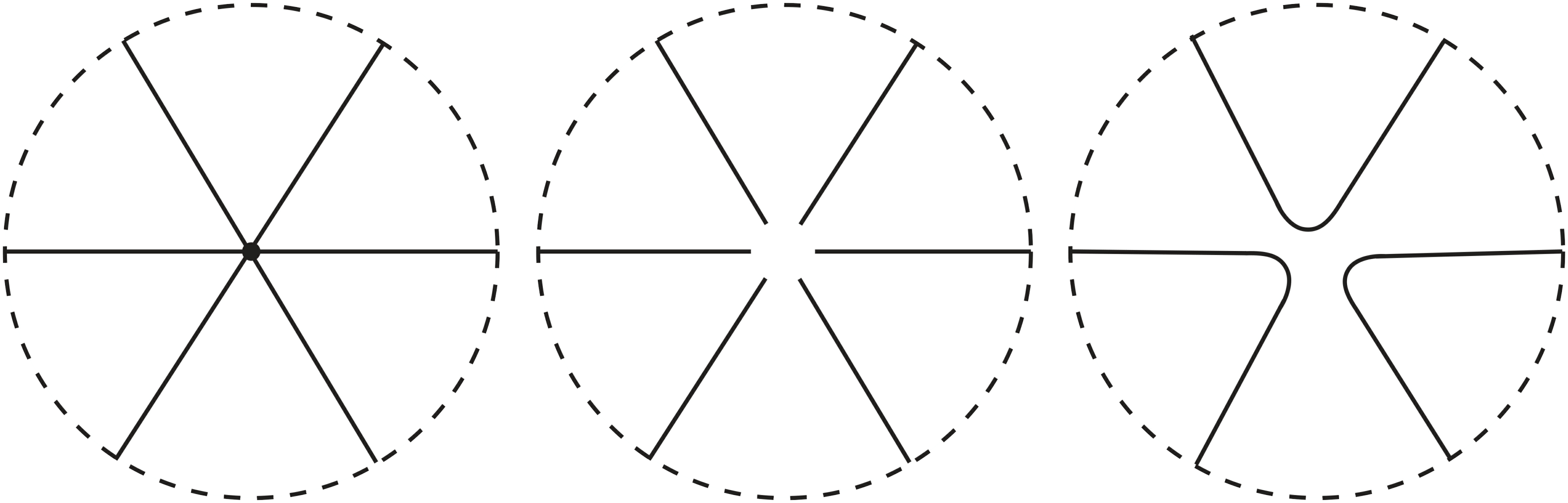}
    \caption{Performing a smoothing locally at a vertex.}
    \label{fig:smoothing}
\end{figure}

\begin{definition} (transition system)
A \textit{transition system} of $G$ is a choice of a transition at every vertex of $G$. A \textit{smooth transition system} is a transition system such that every transition is smooth.
\end{definition}

Any circuit decomposition $C$ of $G$ induces a transition system of $G$ by pairing half-edges traversed consecutively in a circuit of $C$. Likewise, given a transition system $T$, we can recover the circuit decomposition that induces it.

\begin{definition} (A-trail)
A circuit decomposition of $G$ is \emph{smooth} if it induces a smooth transition system. In particular, an Eulerian circuit that induces a smooth transition system is called an \textit{A-trail.} If $G$ contains an A-trail, we say $G$ is \emph{smoothly Eulerian.}
\end{definition}

We are interested primarily in the knot-theoretical properties of A-trails after smoothing, which the following language makes precise.

\begin{definition}
Let $C$ be a smooth circuit decomposition of $G$ inducing the transition system $T$. By smoothing $G$ at each vertex according to $T$, one obtains a link (on $\Sigma$) in $\mathbb{R}^3$, denoted by $\mathcal{L}(C)$.\end{definition}

Any knot-theoretical language applied to $C$ is to be understood in terms of $\mathcal{L}(C)$. For example, we will say an A-trail $A$ is \emph{knotted} if $\mathcal{L}(A)$ is knotted and so forth.

Our main question is the following: when is an embedded graph \emph{origami constructible}, i.e. when does an embedded graph contain an unknotted A-trail to use as the scaffolding route in the DNA origami method.

\subsection{Torus knots.} Since (most of) the graphs we consider are on the torus, we will also need some standard definitions and results from the theory of torus knots. As usual, we will be representing torus graphs and torus knots using a square with its opposite sides identified. Unless otherwise specified, we assume that a point on the top and a point on the bottom are identified if they lie on the same vertical line, and a point on the left is identified with a point on the right if they lie on the same horizontal line. A circle resulting from identifying the endpoints of a vertical line is a \emph{canonical meridian} while a circle resulting from identifying the endpoints of a horizontal line is a \emph{canonical longitude} of the torus. A \emph{meridional curve} is a simple closed curve homotopic to a canonical meridian, while a \emph{longitudinal curve} is a simple closed curve homotopic to a canonical longitude. Unless otherwise specified, we orient longitudinal curves so they go bottom to top and consider this the \emph{positive longitudinal direction} and orient meridional curves left-to-right and consider this the \emph{positive meridional direction}. We will assume in what follows that if a graph embedded on a torus in $\mathbb{R}^3$ is represented in the square, then the canonical longitude and meridion of the square are longitudinal and meridional curves, respectively, of the embedded torus in $\mathbb{R}^3$.

A \textit{torus knot} is a knot embedded onto the surface of an unknotted torus in $\mathbb{R}^3$ (see Figure \ref{fig:torus_trefoil}).  A \textit{torus link} is a link which is embedded on the surface of an unknotted torus in $\mathbb{R}^3$.

\begin{figure}[ht!]
    \centering
    \includegraphics[scale=.2]{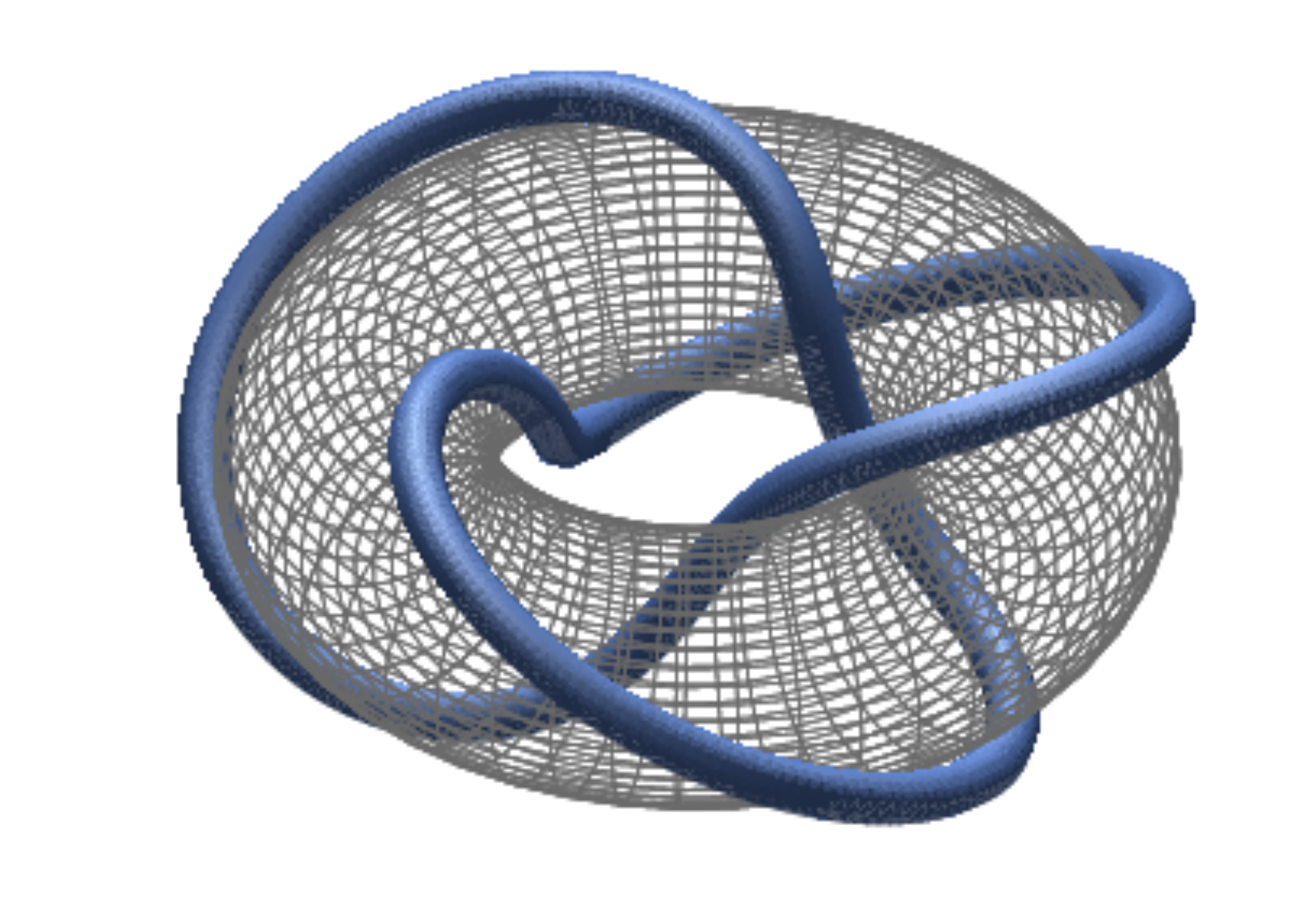}
    \caption{A trefoil knot embedded on a torus. }
    \label{fig:torus_trefoil}
\end{figure}

Recall that an oriented torus link is characterized, as a link, by the number of oriented longitudinal and meridional rotations it completes (we require that the components of a link be consistently oriented). For example, the trefoil knot in Figure \ref{fig:torus_trefoil} completes two positive meridional and three positive longitudinal rotations. Thus we say a link is \emph{the $(p,q)$ torus link} if it completes $p$ longitudinal rotations and $q$ meridional rotations, where the signs of $p$ and $q$ indicate whether these rotations are completed in the positive or negative directions. A $(p,q)$ torus link is a knot if and only if $p$ and $q$ are relatively prime. The $(p,q)$ torus knot is the unknot if and only if either $p$ or $q$ is $1$ (or both are $0$). The \emph{standard embedding} of the $(p,q)$ torus link is obtained by marking $p$ points on the left (and hence also right) of the square, $q$ points on the top (and bottom), and connecting these with lines as in Figure \ref{fig:torus_link}. We refer the reader to e.g. \cite{Lickorish} for a more rigorous and detailed exposition of torus knots.

\begin{figure}[ht!]
    \centering
    \includegraphics[scale=.2]{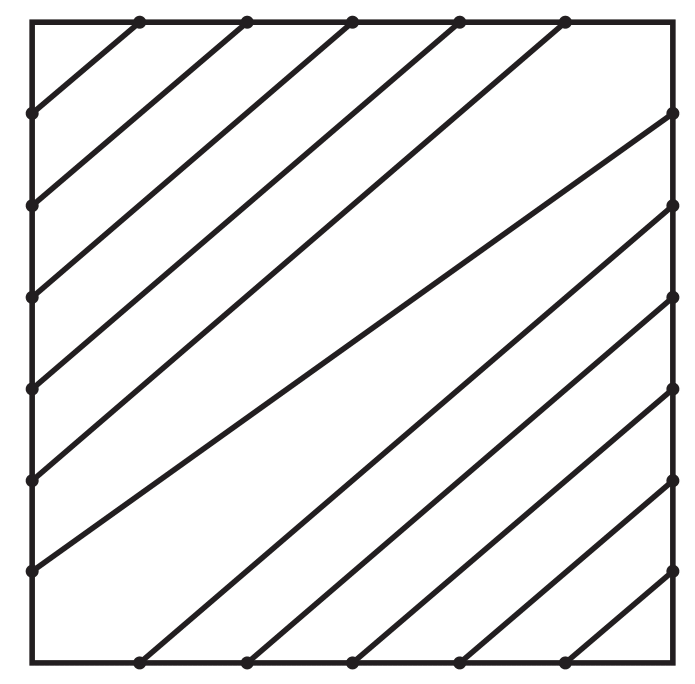}
    \caption{The standard embedding of the (5,6) torus link. }
    \label{fig:torus_link}
\end{figure}

While we provide some results for a more general family of torus graphs, targets of DNA nanostructure self-assembly are often surface meshes, and we will study in particular regular grids.

\begin{definition}\label{def:refinable}
Let $k,l \in \mathbb{N}$ with $l$ even and let $\Sigma$ be a surface. A \emph{$(k,l)$-regular $\Sigma$ grid} is an $l$-regular graph $G$ cellularly embedded on $\Sigma$ whose geometric dual is $k$-regular.
\end{definition}

We note that if there exists a $(k,l)$-regular grid on a torus, then it is either a $(4,4)$- or $(3,6)$-regular grid. Indeed, a straightforward application of the Euler characteristic of the torus shows that in this case $k$ and $l$ must lie on the hyperbola $2k = (k-2)l$. The only positive integer solutions of this equation for which $l$ is even are $(3,6)$ and $(4,4)$ (this can be checked by hand.) Additional information on $(k,l)$-regular grids on higher-genus surfaces can be found in \cite{GERMAN2002}. In the following, we refer to $(3,6)$-regular grids on the torus as \emph{triangular grids}, and $(4,4)$-regular grids on the torus as \emph{rectangular grids.}

\section{Unknotted A-trails and checkerboard-colorable embeddings}

Let $G$ be a graph embedded on the torus. Then $G$ is \emph{checkerboard-colorable} if the faces of $G$ can be properly $2$-colored. A \emph{checkerboard coloring} of $G$ is a particular proper $2$-coloring of the faces of $G$. Note that if $G$ is checkerboard-colorable, $G$ is necessarily Eulerian, but the converse does not hold (except on the plane). Throughout we will use $red$ and $blue$ to refer to the two colors used in a checkerboard coloring. Note that in a checkerboard-colored graph $G$, there are precisely two smooth transitions at a given vertex: the transition pairing those edges bounding red faces and the transition pairing those edges bounding blue faces. We call these transitions \emph{red smoothings} and \emph{blue smoothings} respectively.

In general, a checkerboard-colorable graph may not contain any A-trails (see Theorem \ref{thm:oddv} below for an infinite family of these.) Note that origami constructible embeddings are by definition smoothly Eulerian, but the converse may not hold. The first main result of this section will be the construction of a bijection between A-trails in a checkerboard-colored graph and certain pairs of quasitrees in associated embeddings. As a byproduct of this construction, we will see than any A-trail in a checkerboard-colored graph on the torus is contained in a disk. From this we obtain our second main result: any smoothly Eulerian checkerboard-colorable graph is origami constructible. Thus, the only obstruction to origami construction of a checkerboard-colorable graph on the torus is the existence of A-trails. We close by leveraging the bijection previously constructed to obtain a necessary condition on existence of such A-trails.

We begin by describing two embedded graphs associated to any checkerboard-colorable graph on the torus. Note that the red-graph and blue-graph defined below are, in the $4$-regular case, essentially equivalent to the red and white graphs of Andersen and Bouchet \cite{ANDERSEN1996232}.

\begin{definition} Let $G$ be a graph without loops embedded on the torus. Suppose $G$ is checkerboard-colorable and fix some such coloring in red and blue. The \emph{red-graph} of $G$, denoted $G_r$, is the thickened embedded graph obtained by placing a vertex in each red face of $G$, a vertex on each vertex of $G$, and connecting a vertex of $G_r$ in a face of $G$ to the vertices in the boundary-walk of this face (see Figure \ref{fig:red_graph}). We define the \emph{blue-graph} $G_b$ analogously. Let $x \in \{r,b\}$. A vertex of $G_x$ corresponding to a face of $G$ is a \emph{face-vertex}, a vertex of $G_x$ corresponding to a vertex of $G$ is a \emph{$G$-vertex}. We identify the $G$-vertices of $G_r$ and $G_b$ with the vertices of $G$ in the natural way. Let $v$ be a $G$-vertex of $G_x$. The \emph{$x$-neighborhood} of $v$, denoted $N_x(v)$, consists of $v$ together with its incident $x$-colored face-vertices and the edges between them (see Figure \ref{fig:red_nbhd}). Let $U \subseteq V(G)$. The subgraph of $G_x$ \emph{covered} by $U$ is $$G_x[U] = \bigcup_{v \in U} N_x(v)$$ where the union of two graphs is obtained by taking the union of their vertex sets and edge sets (see Figure \ref{fig:covering}). A \emph{covering subgraph} of $G_x$ is a subgraph $G_x[U]$ for some $U \subseteq V(G)$ that contains every face-vertex of $G_x$. A \emph{covering tree} is a covering subgraph of $G_x$ that is also a tree. A \emph{covering quasitree} is a covering subgraph of $G_x$ that is a quasitree. Note that every covering tree is necessarily a covering quasitree.
\end{definition}

\begin{figure}
    \centering
    \includegraphics[scale=6]{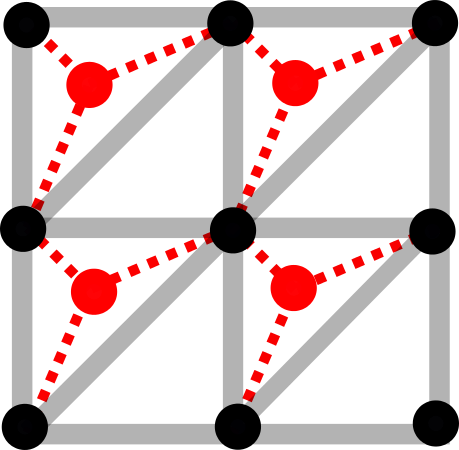}
    \caption{The red graph $G_r$ consists of the red face-vertices, the vertices of the triangular mesh, and the dashed edges.}
    \label{fig:red_graph}
\end{figure}

\begin{figure}
    \centering
    \includegraphics[scale=6]{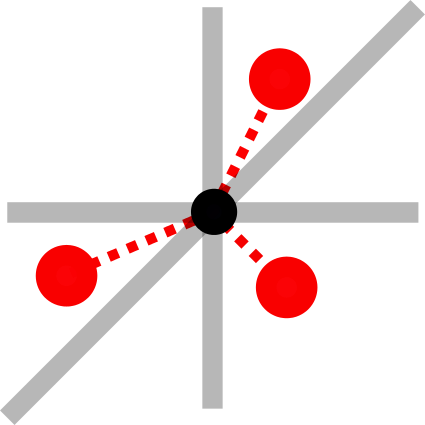}
    \caption{The red neighborhood $N_r(v)$ at the central vertex $v$ of Fig \ref{fig:red_graph} consists of $v$, the vertices in the red faces, and the edges between them and $v$.}
    \label{fig:red_nbhd}
\end{figure}

\begin{figure}
    \centering
    \includegraphics[scale=6]{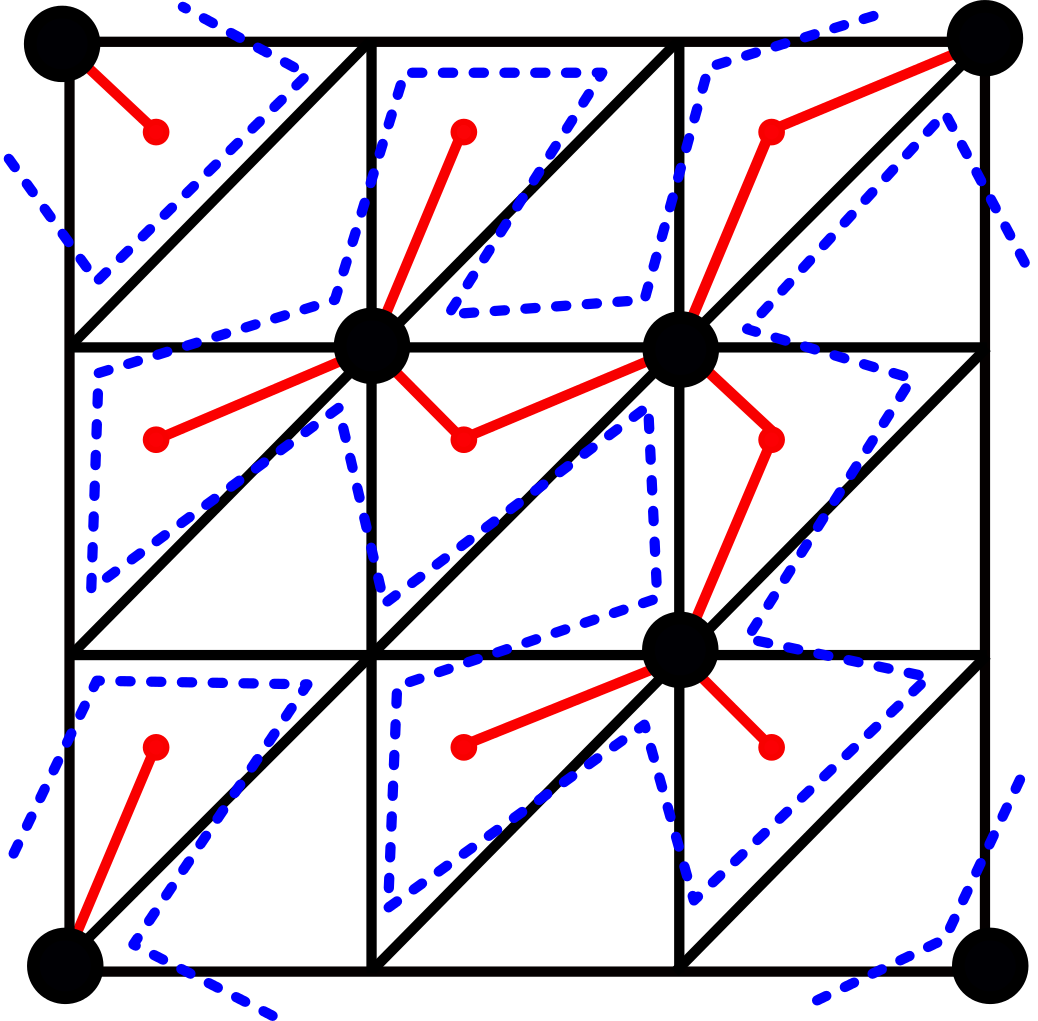}
    \caption{A covering tree $G_r[U]$ and the corresponding A-trail $\mathcal{S}_b[U]$. The vertex-set $U$ consists of the marked round vertices of the triangular grid. The A-trail $\mathcal{S}_b[U]$ is dashed.}
    \label{fig:covering}
\end{figure}

We can associate smooth transition systems of $G$ to subgraphs of $G$ and vice-versa.

\begin{definition} Let $G$ be a graph without loops embedded on the torus. Suppose $G$ is checkerboard-colorable and fix some such coloring in red and blue. Let $x \in \{r,b\}$. Let $x' \in \{r,b\} \setminus \{x\}$. Let $T$ be a smooth transition system of $G$. Denote by $T_r$ the vertices of $G$ at which $T$ is a red smoothing and denoted by $T_b$ the vertices of $G$ at which $T$ is a blue smoothing. Let $U \subseteq V(G)$. Denote by $\mathcal{S}_x[U]$ the smooth transition system that has $x$-smoothings at the vertices of $U$ and $x'$-smoothings at the vertices of $V(G) \setminus U$.
\end{definition}

Using this association, we have the following results.

\begin{lemma} \label{lem:retract} Let $G$ be a graph without loops embedded on the torus. Suppose $G$ is checkerboard-colorable and fix some such coloring in red and blue. Fix $x \in \{r,b\}$, $x' \in \{r,b\} \setminus \{x\}$, and let $U \subseteq V(G)$. Suppose $G_x[U]$ is a covering subgraph. Then $\mathcal{S}_{x'}[U]$ is homotopic to the boundary of $G_x[U]$, viewed as a thickened graph.
\end{lemma}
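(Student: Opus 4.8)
The plan is to analyze the link $\mathcal{S}_{x'}[U]$ directly in a neighborhood of the thickened graph $G_x[U]$ and show it is precisely the boundary curve of that ribbon graph. First I would set up the local picture: inside a small disk around each vertex $v$ of $G$, the half-edges incident to $v$ alternate between bounding red faces and blue faces (since $G$ is checkerboard-colored), and the two smooth transitions at $v$ are exactly the red smoothing and the blue smoothing. The transition system $\mathcal{S}_{x'}[U]$ uses the $x'$-smoothing at every $v \in V(G)\setminus U$ and the $x$-smoothing at every $v \in U$; I want to see the resulting curves as tracing the boundary of the ribbon structure $N_x(v)$ for $v\in U$ (a "star" of the $G$-vertex together with its incident $x$-colored face-vertices) and fitting these together edge-by-edge.

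The key steps, in order: (1) Fix a regular neighborhood $\mathcal{N}$ of $G_x[U]$ in $\Sigma$ (thickening each $G$-vertex and each $x$-face-vertex to a disk and each edge of $G_x[U]$ to a ribbon). Since $G_x[U]$ is a covering subgraph, every $x$-colored face of $G$ contributes a face-vertex, so $\mathcal{N}$ contains all of $G$ except possibly some $x'$-colored face regions and the portions of edges of $G$ not "captured" by an $N_x$-star. (2) Show that $\partial\mathcal{N}$, read as a collection of closed curves on $\Sigma$, meets a neighborhood of each vertex $v$ of $G$ in exactly the arcs produced by the appropriate smoothing: for $v\in U$, the boundary of the star $N_x(v)$ cuts the vertex disk into arcs joining consecutive half-edges around $x$-colored faces, i.e. the $x$-smoothing; for $v\notin U$, the ribbon of $G_x[U]$ does not pass through the disk at $v$ (its $x$-face-vertices are reached through other $G$-vertices in $U$), so $\partial\mathcal{N}$ runs along the two sides of the "gap", which is exactly the $x'$-smoothing at $v$. (3) Similarly check that along each edge $e$ of $G$, the two boundary arcs of $\mathcal{N}$ run parallel to $e$ and connect up with the vertex-arcs at both endpoints in the way dictated by $\mathcal{S}_{x'}[U]$. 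Concatenating these pieces shows $\partial\mathcal{N}$ and $\mathcal{S}_{x'}[U]$ have the same underlying set of closed curves up to isotopy in a neighborhood of $G$, hence are homotopic (indeed isotopic) on $\Sigma$ and therefore as links in $\mathbb{R}^3$. (4) Conclude that $\mathcal{S}_{x'}[U]$ is homotopic to $\partial(G_x[U])$ viewed as a thickened graph.

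I expect the main obstacle to be step (2)–(3): carefully verifying, with the combinatorial bookkeeping of which face-vertices are incident to which $G$-vertices and which edges lie in $G_x[U][U]$, that the "gap" at each $v\notin U$ really produces the $x'$-smoothing and that no $x$-colored face region is left uncovered (this is where the hypothesis that $G_x[U]$ is a \emph{covering} subgraph is essential — otherwise an uncovered $x$-face-vertex would leave an extra boundary component of $\mathcal{N}$ that is not part of $\mathcal{S}_{x'}[U]$). A clean way to organize this is to observe that the ribbon graph $\mathcal{N}$ is obtained from the disk-with-handles neighborhood of $G$ by the standard "graph–to–ribbon" duality: smoothing $G$ at $v$ according to the $x$-transition corresponds to replacing the vertex disk at $v$ by the union of the $x$-face corners meeting at $v$, and doing this simultaneously at all vertices of $U$ (with the complementary choice elsewhere) is exactly the operation that deformation-retracts onto $G_x[U]$. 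Framing it this way reduces the argument to the local pictures in Figures \ref{fig:red_graph}–\ref{fig:covering} together with the remark that $G$ has no loops, so that every face-corner at $v$ is unambiguously red or blue.
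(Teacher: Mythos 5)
Your overall strategy is the same as the paper's: compare the link $\mathcal{L}(\mathcal{S}_{x'}[U])$ with the boundary of a regular neighborhood of the thickened graph $G_x[U]$ by a local analysis and then piece the local pictures together. However, as written there are two concrete problems. First, you have the definition of $\mathcal{S}_{x'}[U]$ backwards: by the paper's convention it places $x'$-smoothings at the vertices of $U$ and $x$-smoothings at the vertices of $V(G)\setminus U$, not the other way around. Correspondingly, your local claim at $v\in U$ is false as stated: the boundary of the thickened star $N_x(v)$ near $v$ consists of arcs running between consecutive ribbons, i.e.\ arcs passing through the $x'$-colored corners at $v$, so it realizes the $x'$-smoothing at $v$, not the $x$-smoothing. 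These two swaps happen to cancel, so the geometric content you are aiming at is the correct one, but both intermediate assertions are wrong relative to the paper's definitions and would need to be fixed before the argument reads correctly.

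Second, and more substantively, your steps (2)--(3) misdescribe what $\partial\mathcal{N}$ looks like away from $U$. The thickened graph $G_x[U]$ consists only of the face-vertex disks and the ribbons joining them to vertices of $U$; it is disjoint from a neighborhood of every vertex $v\notin U$ and from the interiors of the edges of $G$. So $\partial\mathcal{N}$ does not ``meet a neighborhood of each vertex of $G$ in the smoothing arcs,'' and it does not ``run parallel to each edge $e$'': near a non-$U$ vertex and along edges there is simply no boundary there to compare with. The actual work --- which is the entire content of the paper's proof and its retraction figure --- is the homotopy that takes the portion of $\mathcal{S}_{x'}[U]$ running along the boundary of an $x$-colored face $f$ between two consecutive $U$-vertices (through the $x$-smoothed corners at the intermediate non-$U$ vertices) and pushes it across $f$ onto the boundary arc of $\mathcal{N}$ around the face-vertex disk and its two adjacent ribbons; the checkerboard coloring guarantees each edge of $G$ lies on exactly one $x$-colored face, so these per-face homotopies are compatible and assemble into a global one. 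Your closing remark about replacing vertex disks by unions of face corners shows you have roughly this retraction in mind, but the verification you actually propose in (2)--(3) would not go through literally, and this face-by-face homotopy is precisely the step that must be supplied (it is also where the covering hypothesis enters: an uncovered $x$-face would leave a loop of $\mathcal{S}_{x'}[U]$ around that face with nothing in $\partial\mathcal{N}$ to match it, not an extra boundary component of $\mathcal{N}$ as you state).
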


\begin{proof} We can see locally that the boundary of $G_r[U]$ is a deformation retract of $\mathcal{S}_b[U]$ (and the proof for blue graphs follows similarly). Consider a red face $f$. Let $v_f$ be the red vertex of $G_r$ in $f$. Let $w_1,\ldots,w_k$ be the $G$-vertices on the boundary of $f$, with $i_1< \ldots < i_j$ the indices corresponding to vertices in $U$. Each vertex $w_{i_m}$ is smoothed blue, and the paths $w_{i_m} w_{i_m+1} \cdots w_{i_{m+1}}$ are all smoothed red, and hence retract to the boundary of the neighborhood of $v_f$ as in Figure \ref{fig:retract}. Since $G$ is checkerboard-colored, no edge of $G$ lies on two distinct blue faces, hence we can piece together these local retractions to complete the proof.
\end{proof}

\begin{figure}
    \centering
    \includegraphics[scale=2]{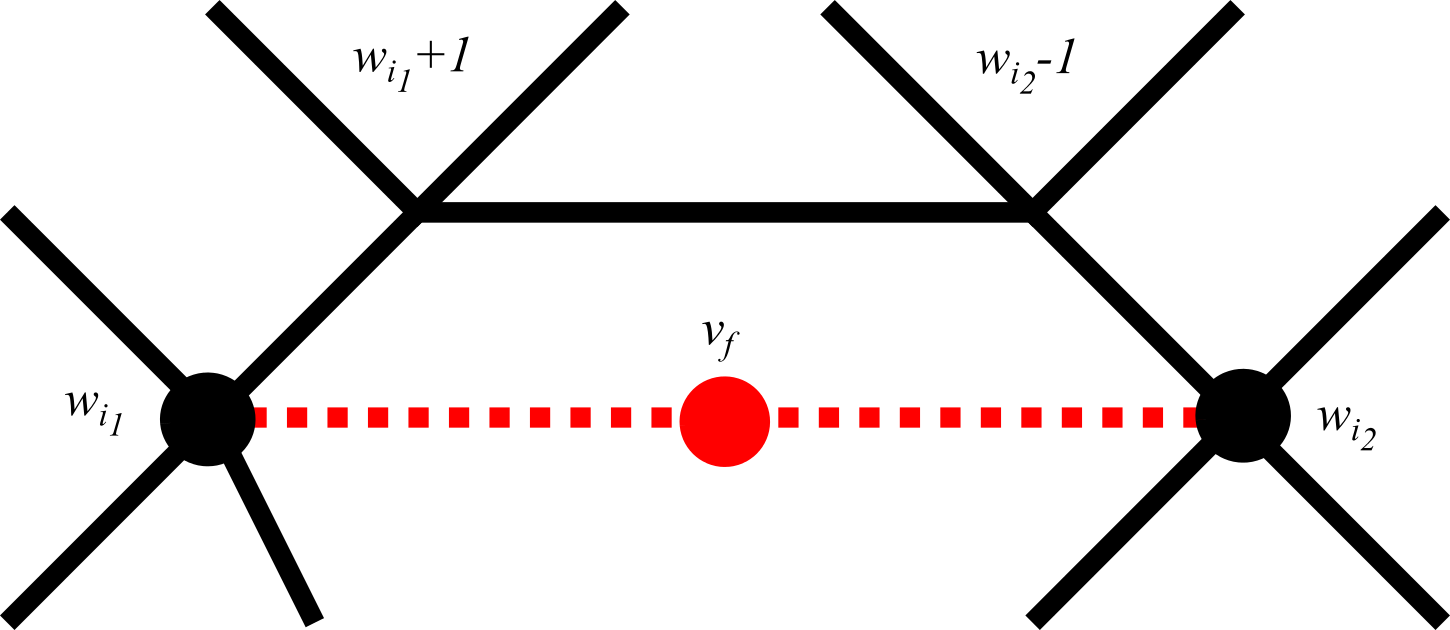} \hspace{2mm}
    \includegraphics[scale=2]{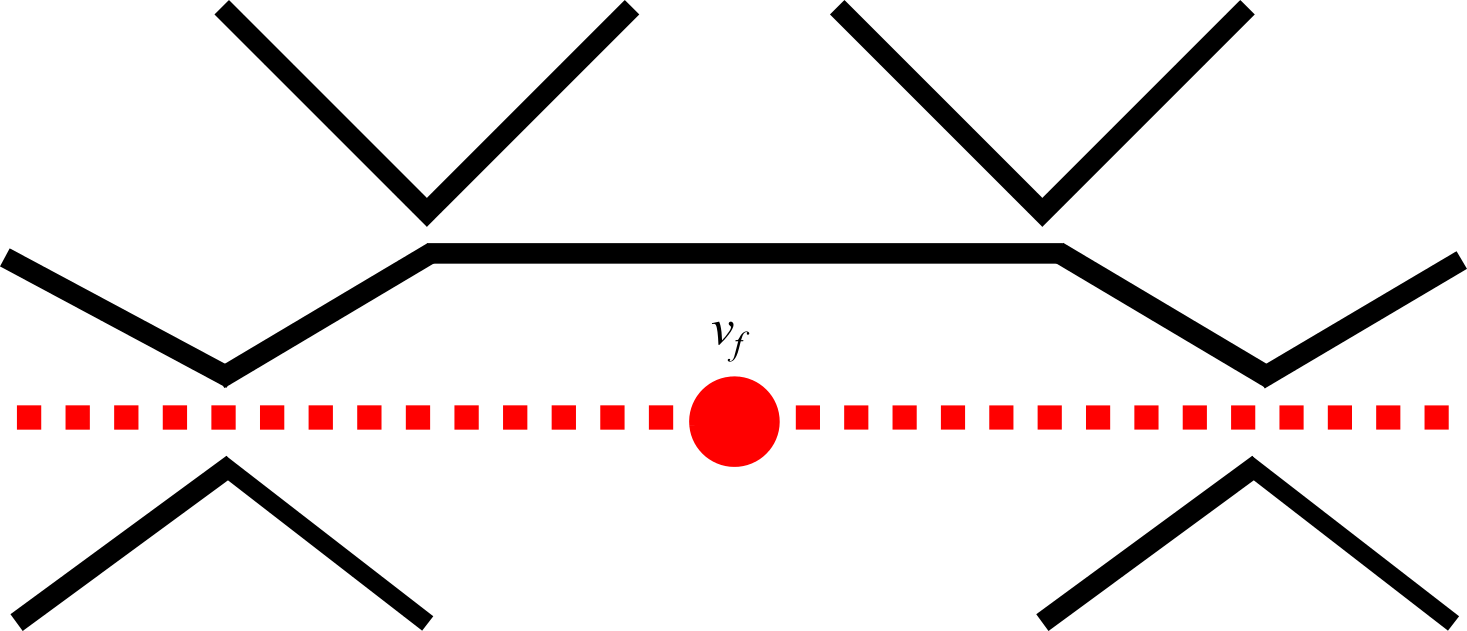}
    \caption{The retraction of $w_{i_m} w_{i_m+1} \cdots w_{i_{m+1}}$ from Lemma \ref{lem:retract}.}
    \label{fig:retract}
\end{figure}

The lemma above allows us to characterize A-trails in terms of pairs of red and blue covering quasitrees.

\begin{theorem} \label{thm:covtree} Let $G$ be a graph without loops embedded on the torus. Suppose $G$ is checkerboard-colorable and fix some such coloring in red and blue. Let $T$ be a smooth transition system of $G$. Then $T$ is an A-trail if and only if one of $G_r[T_b]$ or $G_b[T_r]$ is a covering tree.
\end{theorem}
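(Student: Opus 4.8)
The plan is to translate the statement into one about boundary components of ribbon graphs using Lemma~\ref{lem:retract}, and then to control the relevant genus by an Euler-characteristic count that exploits the fact that the torus has Euler characteristic $0$.

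Consider first the ``if'' direction, and suppose $G_r[T_b]$ is a covering tree (the argument with $G_b[T_r]$ is symmetric). A thickened tree has a single boundary component, so $G_r[T_b]$ is in particular a covering quasitree, and Lemma~\ref{lem:retract} (with $x=r$ and $U=T_b$) shows that $T=\mathcal{S}_b[T_b]$ is homotopic to that single boundary curve. Hence $\mathcal{L}(T)$ is connected, so the circuit decomposition inducing $T$ consists of exactly one circuit; since this circuit covers every edge and induces the smooth transition system $T$, it is an A-trail.

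For the ``only if'' direction, assume $T$ is an A-trail. The first step is to check that $G_r[T_b]$ and $G_b[T_r]$ are both covering subgraphs. If, say, $G_r[T_b]$ were not, then some red face $f$ would have every vertex of $\partial f$ smoothed red; because consecutive edges along $\partial f$ bound $f$ at each such vertex, the red transitions there splice the edges of $\partial f$ together into a single circuit of the $T$-decomposition, which, as $T$ is an A-trail, must be all of $G$. One then checks that this forces $G$ to have exactly one red face, with $T$ the all-red transition system. This monochromatic-face degeneracy is the point I expect to require the most care: it must be treated as a separate case (it does not occur, for instance, when each color class of faces has at least two elements, which covers the grids studied in the later sections). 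Granting that both $G_r[T_b]$ and $G_b[T_r]$ are covering subgraphs, two applications of Lemma~\ref{lem:retract} show that each has exactly as many boundary components as $T$ has circuits, namely one, so each is a covering quasitree. A quasitree $H$ is connected, and thickened $H$ has Euler characteristic $V(H)-E(H)=2-2g(H)-1$, so $g(H)=\frac{1}{2}(E(H)-V(H)+1)$. A corner count, using that faces alternate color around each vertex of a checkerboard-colored graph, gives $E(G_x[U])=\frac{1}{2}\sum_{v\in U}\deg_G(v)$, and for a covering subgraph $V(G_x[U])$ equals $|U|$ plus the number of $x$-colored faces of $G$. Summing over the complementary sets $T_b$ and $T_r$ then yields $E(G_r[T_b])+E(G_b[T_r])=|E(G)|$ and $V(G_r[T_b])+V(G_b[T_r])=|V(G)|+|F(G)|$, and the latter equals $|E(G)|$ because the torus has Euler characteristic $0$. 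Therefore $g(G_r[T_b])+g(G_b[T_r])=1$; as these are non-negative integers, one of them vanishes, and a connected genus-$0$ quasitree satisfies $E=V-1$ and so is a tree. Hence one of $G_r[T_b]$, $G_b[T_r]$ is a covering tree.

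The routine ingredients are the corner count and the standard ribbon-graph identity $\chi=V-E=2-2g-b$, where $b$ is the number of boundary components; the genuinely delicate point is the monochromatic-face case in the ``only if'' direction, where $T$ can be an A-trail while one of the two associated subgraphs fails even to be a covering subgraph.
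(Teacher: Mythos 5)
Your ``if'' direction is the same as the paper's: a covering tree has a single boundary component, Lemma~\ref{lem:retract} transfers this to $\mathcal{L}(T)$, and a one-component smooth circuit decomposition is an A-trail. For the ``only if'' direction you take a genuinely different route. The paper argues topologically: a cycle $C$ in $G_r[T_b]$ cannot be null-homotopic (that would create a second boundary component), there must be a path $P$ joining the two sides of the annulus obtained by cutting along $C$, and then $G_b[T_r]$ sits in the complementary disk, so having one boundary component it is a tree. You instead note that both subgraphs are quasitrees and run an Euler-characteristic count: $E(G_r[T_b])+E(G_b[T_r])=|E(G)|$ and $V(G_r[T_b])+V(G_b[T_r])=|V(G)|+|F(G)|=|E(G)|$ because $\chi$ of the torus is $0$, whence $g(G_r[T_b])+g(G_b[T_r])=1$, and the genus-zero one, being a connected one-boundary ribbon graph with $E=V-1$, is a tree. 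Under the ribbon-graph reading of $G_x[U]$ (one edge per $x$-coloured corner, which is also what Lemma~\ref{lem:retract} requires), this count is correct, and it is arguably cleaner than the paper's cut-and-paste argument; it also yields the genus additivity statement for free.

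The step you flag --- that both $G_r[T_b]$ and $G_b[T_r]$ are covering subgraphs --- is precisely the step the paper asserts with no argument at all, and your caution is justified: it is not automatic, and in fact it cannot be fully repaired. If one colour class consists of a single face (take, for example, the two diagonals of the square torus: two vertices, four edges, one red and one blue quadrilateral face), then the all-red smoothing is an A-trail, $G_r[T_b]$ is empty, and $G_b[T_r]=G_b$ is a genus-one quasitree with doubled edges, not a tree; so the ``only if'' direction fails as literally stated in this degenerate situation. Your analysis of the degeneracy is right (it forces exactly one red face and the all-red transition system), as is your remark that it cannot occur when each colour class has at least two faces, which covers every grid to which the paper later applies the theorem. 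So rather than promising to ``treat the case separately,'' you should either add that hypothesis or record the exception explicitly; as written, this deferred case is the only gap in your argument, and it is one the paper's own proof shares.
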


\begin{proof} Without loss of generality, suppose $G_r[T_b]$ is a covering tree. Since $T$ is a smooth transition system by assumption, it suffices to show that $T$ consists of a single component. Since $G_r[T_b]$ is a covering subgraph, $T = \mathcal{S}_b[T_b]$ is homotopic to the boundary of $G_r[T_b]$. Since $G_r[T_b]$ is a tree, it has a single boundary component. Thus, $T$ consists of a single component.

Now suppose $T$ is an A-trail. Then both $G_r[T_b]$ and $G_b[T_r]$ are covering subgraphs. Since $T = \mathcal{S}_r[T_r] = \mathcal{S}_b[T_b]$, it follows that $T$ is homotopic to both the boundary of $G_b[T_r]$ and the boundary of $G_r[T_b]$. Suppose $G_r[T_b]$ is not a tree, i.e. contains some cycle $C$. If $C$ is null-homotopic, then the boundary of $G_r[T_b]$ has more than one component, contradicting the assumption that $T$ is an A-trail. Thus, $C$ must complete some meridional and/or longitudinal rotations. We claim that $G_r[T_b]$ must contain some path $P \not \subseteq C$ whose endpoints lie on $C$ and, when the torus is cut along $C$ into an annulus $P$ connects the two boundary components of the annulus. Indeed, otherwise the boundary component of $G_r[T_b]$ containing one side of an edge $e \in C$ cannot contain the other side, again contradicting the assumption that $T$ is an A-trail. Since the vertex-sets of $G_r[T_b]$ and $G_b[T_r]$ are disjoint, as an embedded graph $G_b[T_r]$ is contained in the complement of $C \cup P$ on the torus, which is a disk. Thus, since $G_b[T_r]$ is embedded in a disc and has a single boundary component, $G_b[T_r]$ must be a tree.
\end{proof}

The proof of the above theorem yields the following corollary.

\begin{corollary} A smoothly Eulerian graph $G$ on the torus is checkerboard-colorable if and only if any A-trail in $G$ is contained in some open disk (which may depend on the A-trail).
\end{corollary}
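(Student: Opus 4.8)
The plan is to reduce the corollary to a single homological fact about $G$: that the smoothed curve of any A-trail represents the same mod-$2$ homology class as $G$, and that this class is exactly what detects checkerboard-colorability. I would begin by recording the standard criterion that a cellularly embedded graph $G$ on the torus is checkerboard-colorable if and only if the $1$-cycle $\sum_{e \in E(G)} e$ is null-homologous in the first homology group of the torus with $\mathbb{Z}/2\mathbb{Z}$ coefficients. In one direction, a proper $2$-coloring of the faces realizes the set of red faces as a $2$-chain whose boundary is $\sum_e e$, since each edge is incident to exactly one red and one blue face; in the other, if $\sum_e e = \partial c$ for a $2$-chain $c$ (necessarily a subset of the faces), then every edge lies on exactly one face of $c$, so $c$ and its complement provide the coloring.

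The substantive step is then a lemma stating that for \emph{any} smooth transition system $T$ of $G$, the curve $\mathcal{L}(T)$ and $G$ represent the same class over $\mathbb{Z}/2\mathbb{Z}$. To prove it I would fix a cell structure on the torus fine enough that $G$, $\mathcal{L}(T)$, and a small disk $B_v$ around each vertex $v$ are all subcomplexes, the $B_v$ pairwise disjoint and each containing all of the smoothing modification at $v$. Then $G$ and $\mathcal{L}(T)$ agree outside $\bigcup_v B_v$, so their mod-$2$ difference is supported there, and inside each $B_v$ it is the union of the star at $v$ with the $\deg_G(v)/2$ smoothing arcs prescribed by $T$ — a $1$-cycle contained in the disk $B_v$, hence null-homologous on the torus. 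Summing over $v$ gives $[\mathcal{L}(T)] = [G]$.

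With these two ingredients the corollary assembles quickly. If $G$ is checkerboard-colorable then $[G] = 0$, so $[\mathcal{L}(A)] = 0$ for every A-trail $A$; since $\mathcal{L}(A)$ is a single embedded simple closed curve on the torus, and such a curve has trivial or primitive integral homology class, a vanishing mod-$2$ class forces the integral class to vanish, so $\mathcal{L}(A)$ is null-homotopic, bounds a disk, and therefore lies in an open disk. (When $G$ is loopless this implication also follows directly from Theorem~\ref{thm:covtree} and Lemma~\ref{lem:retract}: one of $G_r[A_b]$, $G_b[A_r]$ is a covering tree, $\mathcal{L}(A)$ retracts onto the boundary of that thickened tree, and a regular neighborhood of a tree in a surface is a disk.) Conversely, if $G$ is smoothly Eulerian and some A-trail $A$ has $\mathcal{L}(A)$ contained in an open disk, then $[\mathcal{L}(A)] = 0$, hence $[G] = 0$, hence $G$ is checkerboard-colorable. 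Since a smoothly Eulerian $G$ has at least one A-trail, combining the two implications yields the stated equivalence — indeed the stronger assertion that if $G$ is smoothly Eulerian but not checkerboard-colorable then \emph{no} A-trail of $G$ lies in a disk.

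I expect the homological invariance of smoothing to be the only step requiring real care. There one must verify that the per-vertex correction term is genuinely a cycle — equivalently, that each of its $0$-cells (the vertex $v$ itself, the points where half-edges meet $\partial B_v$, and the interior points of the smoothing arcs) has even valence, which is precisely where evenness of $\deg_G(v)$ enters — and that the chosen cell structure makes these chain-level manipulations legitimate. The remaining ingredients are routine: the dictionary between face $2$-colorings and mod-$2$ bounding $2$-chains, the fact that a null-homotopic simple closed curve on the torus bounds a disk and so sits inside an open disk, and the classification of simple closed curves on the torus as trivial or primitive.
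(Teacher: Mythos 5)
Your proof is correct, but it takes a genuinely different route from the paper's. You replace the paper's two separate arguments with a single homological invariant: you show $[\mathcal{L}(T)]=[G]$ in $H_1$ of the torus with $\mathbb{Z}/2\mathbb{Z}$ coefficients for \emph{every} smooth transition system $T$ (a local computation in vertex disks, where evenness of degrees makes the correction term a null-homologous cycle), and you combine this with the standard equivalence between checkerboard-colorability and $\sum_e e$ being a mod-$2$ boundary, plus the classification of simple closed curves on the torus (trivial or primitive, so mod-$2$ triviality forces null-homotopy, hence containment in a disk). The paper instead argues the forward direction through the covering-quasitree machinery of Theorem \ref{thm:covtree} (the A-trail retracts to the boundary of a covering tree, which sits in the disk complementary to $C\cup P$), and the converse directly with the Jordan--Sch\"onflies theorem: the curve separates its disk into interior and exterior, and coloring faces by side of the curve yields a proper $2$-coloring. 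Your approach is shorter, does not inherit the loopless hypothesis of Theorem \ref{thm:covtree} (a mild strengthening of the corollary as stated), treats both directions uniformly, and yields the sharper dichotomy that either every A-trail or no A-trail of a smoothly Eulerian graph lies in a disk; the homology lemma also generalizes readily to other surfaces and to smooth circuit decompositions. The paper's route, by contrast, is more constructive -- it produces the coloring and the disk explicitly -- and keeps the corollary tied to the covering-tree bijection that drives the rest of the paper (e.g.\ Corollary \ref{cor:nec} and Theorem \ref{thm:oddv}). The only points needing care in your write-up are exactly the ones you flag: verifying the per-vertex difference is a cycle, and, in the colorability criterion, noting that no edge can have the same face on both sides when $\sum_e e$ bounds; both check out.
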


\begin{proof} Suppose $G$ is checkerboard-colorable. Let $T$ be an A-trail of $G$. From the proof of Theorem \ref{thm:covtree} we have that $T$ is contained in the complement of $C \cup P$ on the torus, which is an open disk.

Now suppose any A-trail in $G$ is contained in some open disk. Let $T$ be one such A-trail, contained in the open disk $D$. By the Jordan-Sch\"{o}nflies theorem, $T$ separates $D$ into an interior $I$ and exterior $E$ for which it is the common boundary. By the definition of smooth transitions, each face of $G$ is either contained entirely in $E$ or entirely in $I$. Color those faces in $I$ red and color the faces in $E$ blue. We claim that this properly 2-colors the faces of $G$. Indeed, give $T$ an orientation. Since $T$ is the common boundary of both $I$ and $E$, $I$ is either consistently to the left of $T$ or consistently to the right of $T$ under that orientation. Without loss of generality we may assume the former. Then each edge of $G$ has a red face to its left, and a blue face to its right, and therefore the faces of $G$ are properly 2-colored.
\end{proof}

It therefore follows that, for checkerboard-colorable graphs, being smoothly Eulerian and being origami constructible are equivalent properties.

\begin{theorem} Let $G$ be a checkerboard-colorable graph on the torus. Then $G$ is origami constructible (regardless of the embedding of the torus in space) if and only if $G$ is smoothly Eulerian.
\end{theorem}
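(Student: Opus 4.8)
The plan is to dispatch one direction trivially and reduce the other entirely to the Corollary just proven, together with the observation that a simple closed curve lying in a disk is unknotted.

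First, the easy direction: if $G$ is origami constructible then by definition it contains an unknotted A-trail, which is in particular an A-trail, so $G$ is smoothly Eulerian. This requires nothing about $G$ beyond having such a trail, and in particular holds regardless of the embedding of the torus.

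For the converse, I would start from an arbitrary A-trail $A$ of the smoothly Eulerian graph $G$. Since $G$ is checkerboard-colorable, the Corollary supplies an open disk $D$ on the torus containing $A$. Inspecting the proof of Theorem \ref{thm:covtree}, I would note that $D$ arises as the complement in the torus of the curve $C \cup P$ built from the covering subgraphs $G_r[T_b]$ and $G_b[T_r]$, so $D$ is determined by the embedding of $G$ on the \emph{abstract} torus, not by any particular embedding of the torus into $\mathbb{R}^3$. Now fix any embedding of the torus in $3$-space. The link $\mathcal{L}(A)$ is a single simple closed curve, since $A$ is a single circuit, and it sits inside the image of $D$, which is a tamely embedded open disk in $\mathbb{R}^3$. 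Running the Jordan--Sch\"{o}nflies argument from the Corollary's proof inside $D$, the curve $\mathcal{L}(A)$ bounds a subdisk of $D$; this subdisk is an embedded disk in $\mathbb{R}^3$ with boundary $\mathcal{L}(A)$, so $\mathcal{L}(A)$ is the unknot. Hence $A$ is an unknotted A-trail and $G$ is origami constructible, for every embedding of the torus in space.

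The step I expect to require the most care is the upgrade from ``$A$ is contained in a disk on the torus'' to ``$A$ is unknotted in every embedding'': the point is that $D$ is combinatorially defined, so its image under any tame embedding of the torus is still an embedded disk, and an embedded disk in $\mathbb{R}^3$ contains no knotted simple closed curves because each such curve bounds a subdisk of $D$. Once that observation is stated cleanly, the theorem follows by combining the Corollary with the trivial implication, and no further computation is needed.
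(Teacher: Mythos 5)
Your proposal is correct and follows essentially the same route as the paper: the forward direction is definitional, and the converse combines the Corollary (any A-trail of a checkerboard-colorable torus graph lies in a disk) with Jordan--Sch\"{o}nflies to conclude the trail bounds a disk that persists under any embedding of the torus in space, hence is unknotted. Your extra remark that the disk is determined combinatorially, independently of the spatial embedding, simply makes explicit what the paper's phrase ``remains a disk when the torus is embedded in space'' leaves implicit.
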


\begin{proof} Origami constructible implies smoothly Eulerian for any embedded graph by definition.  Now suppose $G$ is smoothly Eulerian, and let $T$ be an A-trail of $G$. since $G$ is checkerboard-colorable, $T$ is contained in a disk, and so by Jordan-Sch\"{o}nflies bounds a disk on the torus, which remains a disk when the torus is embedded in space. Therefore, $T$ is unknotted.
\end{proof}

Thus, to demonstrate that a checkerboard-colorable torus graph is origami constructible we need only demonstrate that it contains an A-trail. We close this section by recording the following necessary condition for existence of such A-trails, to be used in the following section in the construction of an infinite family of origami-constructible triangulations of the torus.

\begin{corollary} \label{cor:nec} Let $G$ be a checkerboard-colored graph on the torus. Let $k_r$ and $k_b$ be the total number of red faces and total number of blue faces respectively. Suppose each vertex of $G$ is incident to an odd number of red faces and an odd number of blue faces. If $G$ contains an A-trail, then one of $k_r$ or $k_b$ must be odd.
\end{corollary}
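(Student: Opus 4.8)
The plan is to reduce the statement to a parity count on a covering tree, using Theorem \ref{thm:covtree}. Suppose $G$ contains an A-trail, and let $T$ be the smooth transition system it induces. By Theorem \ref{thm:covtree}, at least one of $G_r[T_b]$ or $G_b[T_r]$ is a covering tree. Without loss of generality I would assume $G_r[T_b]$ is a covering tree and show that $k_r$ is odd; the case that $G_b[T_r]$ is a covering tree is handled by the identical argument with the roles of the two colors exchanged, and yields that $k_b$ is odd.

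Assuming $G_r[T_b]$ is a covering tree, the next step is to compute its number of edges in two different ways. On one hand, since $G_r[T_b] = \bigcup_{v\in T_b} N_r(v)$ and each $N_r(v)$ contributes exactly the single $G$-vertex $v$, the $G$-vertices of $G_r[T_b]$ are precisely the vertices of $T_b$; and because $G_r[T_b]$ is a covering subgraph it contains all $k_r$ red face-vertices of $G_r$. Hence $|V(G_r[T_b])| = |T_b| + k_r$, and since a tree has exactly one fewer edge than vertex, $|E(G_r[T_b])| = |T_b| + k_r - 1$. On the other hand, every edge of $G_r$ joins a $G$-vertex to a face-vertex, so every edge of $G_r[T_b]$ is incident to exactly one vertex of $T_b$ (its $G$-vertex endpoint) and lies in $N_r$ of that vertex; grouping the edges by this endpoint gives $|E(G_r[T_b])| = \sum_{v \in T_b}\bigl|E(N_r(v))\bigr|$, and $|E(N_r(v))|$ equals the number of red faces incident to $v$. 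By hypothesis this number is odd for each $v$, so $|E(G_r[T_b])| \equiv |T_b| \pmod 2$.

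Comparing the two counts gives $|T_b| + k_r - 1 \equiv |T_b| \pmod 2$, that is, $k_r$ is odd, which completes the argument. The only point that needs care — and the main (modest) obstacle — is the identification of $|E(N_r(v))|$ with the number of red faces incident to $v$: one must make sure that a red face appearing more than once in the rotation at $v$ is counted with that multiplicity, so that the hypothesis "incident to an odd number of red faces" really controls the parity of the local edge count. In the triangulations to which this corollary will be applied in the next section, the faces are triangles meeting each vertex cleanly, so no such ambiguity arises.
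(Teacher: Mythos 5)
Your proof is correct, and it reaches the parity conclusion by a slightly different route than the paper. Both arguments start the same way, invoking Theorem \ref{thm:covtree} to extract a covering tree, say $G_r[T_b]$, from the A-trail. The paper then shows by induction that \emph{any} tree of the form $G_r[A]$ has an odd number of face-vertices: it peels off the last $G$-vertex $v$ of a maximal path and observes that $N_r(v)$ contributes an even number (stated as ``two'') of new face-vertices, with base case ``three'' for $|A|=1$; since a covering tree contains all $k_r$ red face-vertices, $k_r$ is odd. You instead double-count the edges of the covering tree: $|E| = |V|-1 = |T_b| + k_r - 1$ on one hand, and $|E| = \sum_{v \in T_b} |E(N_r(v))|$ on the other, with each summand odd by hypothesis, giving $k_r \equiv 1 \pmod 2$ directly. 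Your count has the advantage of using the odd-incidence hypothesis uniformly and verbatim, whereas the paper's induction as written quotes the numbers $3$ and $2$ and is thus really phrased for the triangular-grid application (each vertex meeting exactly three faces of each color), though the same induction clearly goes through with ``odd'' and ``even'' in place of those constants. As for the multiplicity caveat you raise: it is moot in the case that matters, since a tree has no parallel edges, so for $v \in T_b$ the number of edges of $N_r(v)$, the number of red corners at $v$, and the number of distinct red faces at $v$ all coincide; the paper's proof silently relies on the same identification.
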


\begin{proof} We begin by showing that trees $G_x[A]$ for $A \subseteq V(G)$ have an odd number of face vertices. Let $A$ be a set of $G$-vertices. If $|A| = 1$, then the number of face-vertices of $G_r[A]$ is $3$ and hence odd. Now suppose $|A| > 1$. Let $P$ be a maximal path in $G_r[A]$. Let $v$ be the last $G$-vertex of $P$. By induction, the number of face-vertices of $G_r[A \setminus \{v\}]$ is odd. But since $G_r[A]$ contains no cycles, $N_r(v)$ contains two face-vertices not in $G_r[A \setminus \{v\}]$, and hence $G_r[A]$ has an odd number of face-vertices.

To complete the argument, we note that if $G$ contains an A-trail, the previous theorem implies that either the red graph or the blue graph contains a covering tree, which by the previous argument has an odd number of face-vertices.
\end{proof}

\section{Origami construction of triangular torus grids} 

The main result of this section is a characterization of existence of A-trails in a certain infinite family of triangular torus grids. As a result, we will have constructed an infinite family of origami constructible triangular grids as well as an infinite family of non-origami constructible (indeed, non-smoothly Eulerian) triangular grids. We give an explicit construction of unknotted A-trails in the origami constructible family.

Representations of triangular grids in the torus have all been constructed by Altshuler \cite{ALTSHULER1973201} as follows. Let $G$ be a triangular grid. A \emph{straight-ahead} walk in $G$ is a walk $C$ so that the edges paired at each $v$ in $C$ have two other edges between them in the cyclic order at $v$. Altshuler has shown that beginning at any vertex and walking straight-ahead in any direction will trace out a straight-ahead cycle ending at the first vertex. Let $a_1^1$ be a vertex of $G$. Let $C= a_1^1, a_2^1, \ldots, a_n^1$ be one of the three straight-ahead cycles through $a_1^1$. Let $a_1^2$ be the vertex two steps counterclockwise from $a_2^1$ in the cyclic order at $a_1^1$, and let $a_1^1 a_1^2 \cdots a_r^1 a_m^1$ be the straight ahead path starting at $a_1^1$, going through $a_1^2$ which (necessarily) ends at a vertex $a_m^1$ on $C$. Then $r = v/n$ is an integer and $G$ can be represented as in Figure \ref{fig:Alt_gen} \cite{ALTSHULER1973201}. Note that the identification of the vertical sides is the same as usual, but the top and bottom are identified as labeled by the vertices $a_i^j$, i.e. after gluing the vertical sides together, one performs a twist before gluing the top to the bottom. Following Altshuler's notation, we denote this representation of $G$ by $T_m^{v,r}$. We call all such representations \emph{Altshuler representations.}

\begin{figure}[ht!]
    \centering
    \includegraphics[scale=0.15]{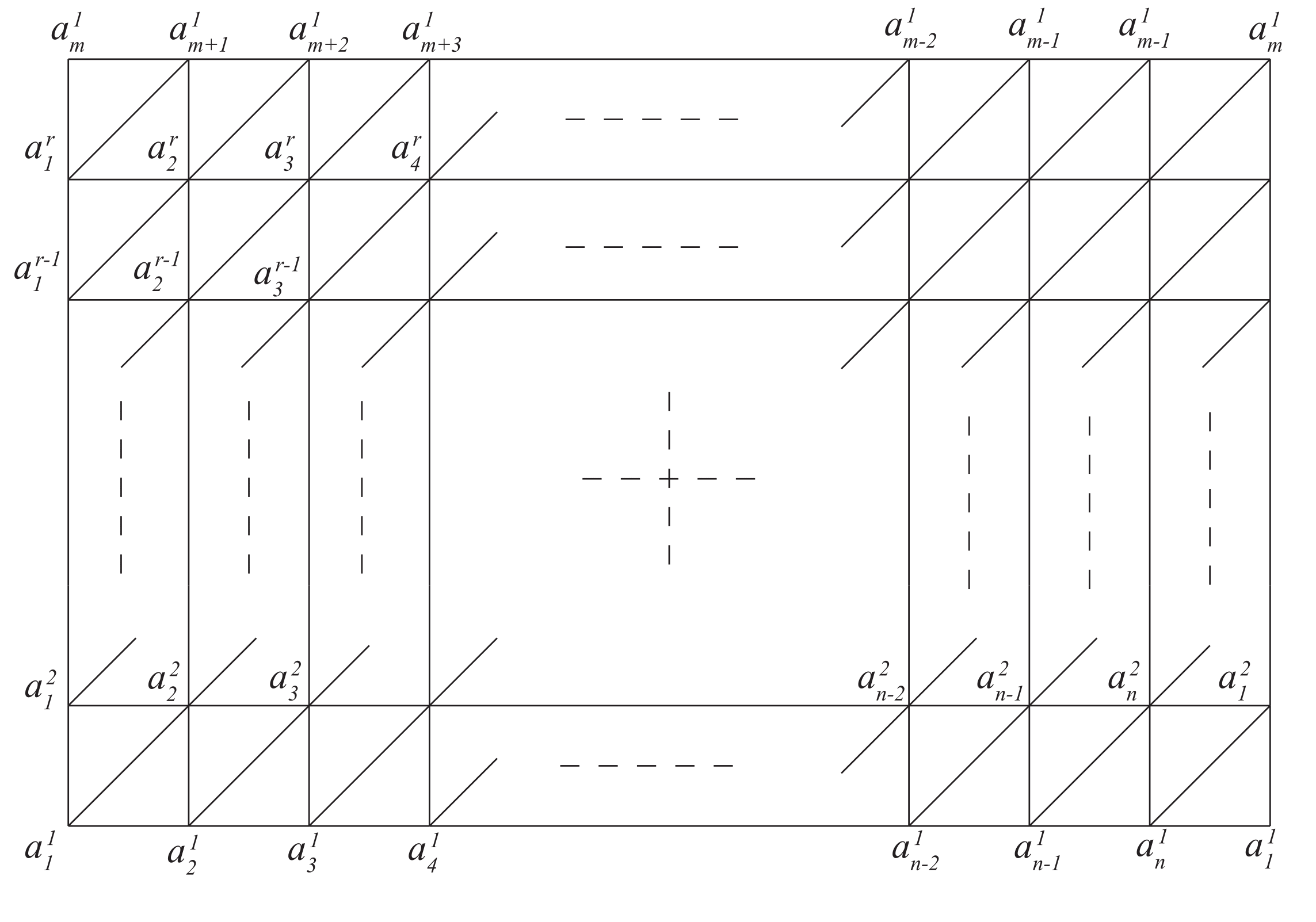}
    \caption{Altshuler's general representation of a $(3,6)$-regular triangulation of the torus.}
    \label{fig:Alt_gen}
\end{figure}

Directly from Altshuler's representation we have that any triangular grid is checkerboard-colorable. Hence, a triangular grid is origami constructible if and only if it is smoothly Eulerian. In the remainder of this section we will character smoothly Eulerian grids for a large class of triangular grids. Let $G$ be a triangular grid with $v$ vertices. We say $G$ is \emph{straight-ahead Hamiltonian (SAH)} if every non-loop straight ahead cycle is Hamiltonian. Note that this is equivalent to the statement that every Altshuler representation of $G$ has $r \in \{1,v\}$.

We can now characterize the smoothly Eulerian triangular grids that are not SAH.

\begin{theorem} \label{thm:oddv} Let $G$ be a triangular grid with $v$ vertices that is not SAH. Then $G$ is smoothly Eulerian if and only if $v$ is odd.
\end{theorem}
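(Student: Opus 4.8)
The plan is to use Corollary~\ref{cor:nec} and Theorem~\ref{thm:covtree} together with an explicit parity count coming from Altshuler's representation $T_m^{v,r}$. First I would verify the hypothesis of Corollary~\ref{cor:nec}: in any triangular grid each vertex has degree $6$, and in the checkerboard coloring coming from the Altshuler representation the six triangles around a vertex alternate red/blue, so each vertex is incident to exactly $3$ red and $3$ red faces—both odd. Moreover, a triangulation of the torus with $v$ vertices has $2v$ faces, split evenly into $k_r = k_b = v$ red faces and $v$ blue faces (this evenness forces the alternation to be globally consistent, which is where checkerboard-colorability comes from). Thus Corollary~\ref{cor:nec} immediately gives: if $G$ is smoothly Eulerian then $k_r = v$ or $k_b = v$ is odd, i.e.\ $v$ is odd. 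That is the ``only if'' direction, and it requires essentially no case analysis beyond the face count.

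For the ``if'' direction—$v$ odd implies $G$ is smoothly Eulerian—I would construct an explicit covering tree in $G_r$ (or $G_b$) and invoke Theorem~\ref{thm:covtree}. The natural candidate for $U \subseteq V(G)$ is dictated by the non-SAH hypothesis: since $G$ is not SAH, some Altshuler representation has $1 < r < v$, so there is a straight-ahead cycle $C$ of length $n = v/r < v$ that is \emph{not} Hamiltonian, and $C$ partitions $V(G)$ into $r$ parallel straight-ahead cycles $C_1, \dots, C_r$, each of length $n$. I would take $U$ to be a carefully chosen ``staircase'' set—roughly, alternate vertices along one of these cycles, or one full cycle $C_i$—and check that $G_r[U]$ (i) covers every red face-vertex and (ii) is a tree. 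The covering condition should follow because the red faces around a straight-ahead cycle are hit by consecutive vertices of that cycle; the tree condition (no cycles, connected) is where one must be careful, and the parity $v$ odd should be exactly what makes the ``staircase'' close up into a spanning path on the face-vertices rather than a cycle. I would draw the picture in an Altshuler representation $T_m^{v,r}$ and track how $N_r(v)$ for $v$ ranging over $U$ fits together along the identified edges.

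The main obstacle I expect is the second direction: producing a set $U$ that simultaneously covers all $v$ red face-vertices and induces a tree, uniformly across all non-SAH grids. The delicate points are that the required twist in the top-bottom identification (the parameter $m$) affects how the neighborhoods $N_r(v)$ glue along the seam, and that ``not SAH'' only tells us $r \notin \{1,v\}$ without pinning down $r$ or $m$. I would handle this by working in the universal cover / developing map: lift $G$ to the standard triangular lattice in the plane, choose $U$ to be the preimage of a single straight-ahead lattice line (or a thin strip), and show that modulo the lattice of deck transformations for $T_m^{v,r}$ the induced red subgraph is a tree precisely because $v$ is odd forces the relevant period to be ``out of phase.'' An alternative, if the direct construction proves too fiddly, is an Eulerian-circuit splicing argument: start from \emph{any} circuit decomposition realizing a smooth transition system with few components, and show that when $v$ is odd one can merge components pairwise by flipping a smoothing at a single vertex without creating interweaving—again with $v$ odd controlling the parity of the number of components. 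I would first attempt the explicit covering tree, since Figure~\ref{fig:covering} suggests the authors have such a construction in mind, and fall back to the splicing argument only if the tree verification does not go through cleanly.
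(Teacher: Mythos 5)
Your ``only if'' direction is correct and is essentially the paper's argument: each vertex of a triangular grid sees three red and three blue faces in a checkerboard coloring, $k_r=k_b=f/2=v$ on the torus, and Corollary~\ref{cor:nec} forces $v$ odd. (The non-SAH hypothesis plays no real role there; the paper invokes the Altshuler representation with $1<r<v$ mainly to have the standard picture in hand.)

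The gap is in the ``if'' direction, which is where the theorem's content lies, and you have not actually proved it. The paper does not search for a covering tree abstractly: it uses that $v$ odd together with $v=rn$ forces $r$ and $n$ odd, hence $r\ge 3$ by non-SAH, and then writes down an explicit smooth transition system in the Altshuler representation $T_m^{v,r}$ --- a fixed pattern on the top row and on the bottom two rows, with the break in the bottom-row pattern placed at the vertex $a_m^1$ to absorb the twist in the top--bottom identification, and (for $r>3$) a repeating two-row filler pattern whose insertion is possible precisely because $r$ is odd; one then checks directly that this is a single closed trail for every $m$ (Figure~\ref{fig:trichar}). Your proposal instead names candidate sets $U$ for Theorem~\ref{thm:covtree} that do not work: taking $U$ to be a full straight-ahead cycle $C_i$ makes $G_r[U]$ contain a cycle wrapping around the torus (consecutive vertices of $C_i$ share a red face, so the stars chain up and close), hence it is not a tree, and for $r>3$ such a $U$ also fails to cover the red face-vertices in strips far from $C_i$; ``alternate vertices along one cycle'' fails the covering condition for the same reason. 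You acknowledge this is the main obstacle and defer to a universal-cover argument or a component-splicing argument, but neither is carried out, and the splicing idea in particular is delicate because flipping a single smoothing need not reduce the number of components and can destroy smoothness of the global structure you are tracking. As written, the proposal establishes only one implication of Theorem~\ref{thm:oddv}; to complete it you would need to supply an explicit construction (a concrete covering tree $G_r[U]$, or equivalently a concrete transition pattern as in the paper) that works for all odd $v$, all $1<r<v$, and all values of the twist parameter $m$.
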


\begin{proof} Suppose $G$ is a triangular grid with $v$ vertices containing an A-trail $T$. Fix a checkerboard coloring and Altshuler representation $T_m^{v,r}$ of $G$ with $1<r<v$ (which is possible since $G$ is not SAH). Note that each vertex of $G$ is incident to three red faces and three blue faces. By Corollary \ref{cor:nec}, either $k_r$ (the total number of red faces) or $k_b$ (the total number of blue faces) is odd. If $f$ is the number of faces of $G$, from Altshuler's representation we have that $k_r = k_b = f/2 = v$. Thus, $v$ is odd.

Now suppose $v$ is odd. We have $r \geq 3$. Suppose $r=3$. An A-trail in this case is given by the transitions in the top row and bottom two rows of Figure \ref{fig:trichar}, with the break in the pattern in the bottom row occurring at the vertex $a_m^1$. It is straightforward to check that no matter the value of $m$, this yields an A-trail. Now suppose $r > 3$. Since $r$ is odd, we can fill the rows between the first row and the bottom two rows of Figure with the pattern demonstrated in Figure \ref{fig:trichar}.
\end{proof}

\begin{figure}[h]
    \centering
    \includegraphics[scale=.1]{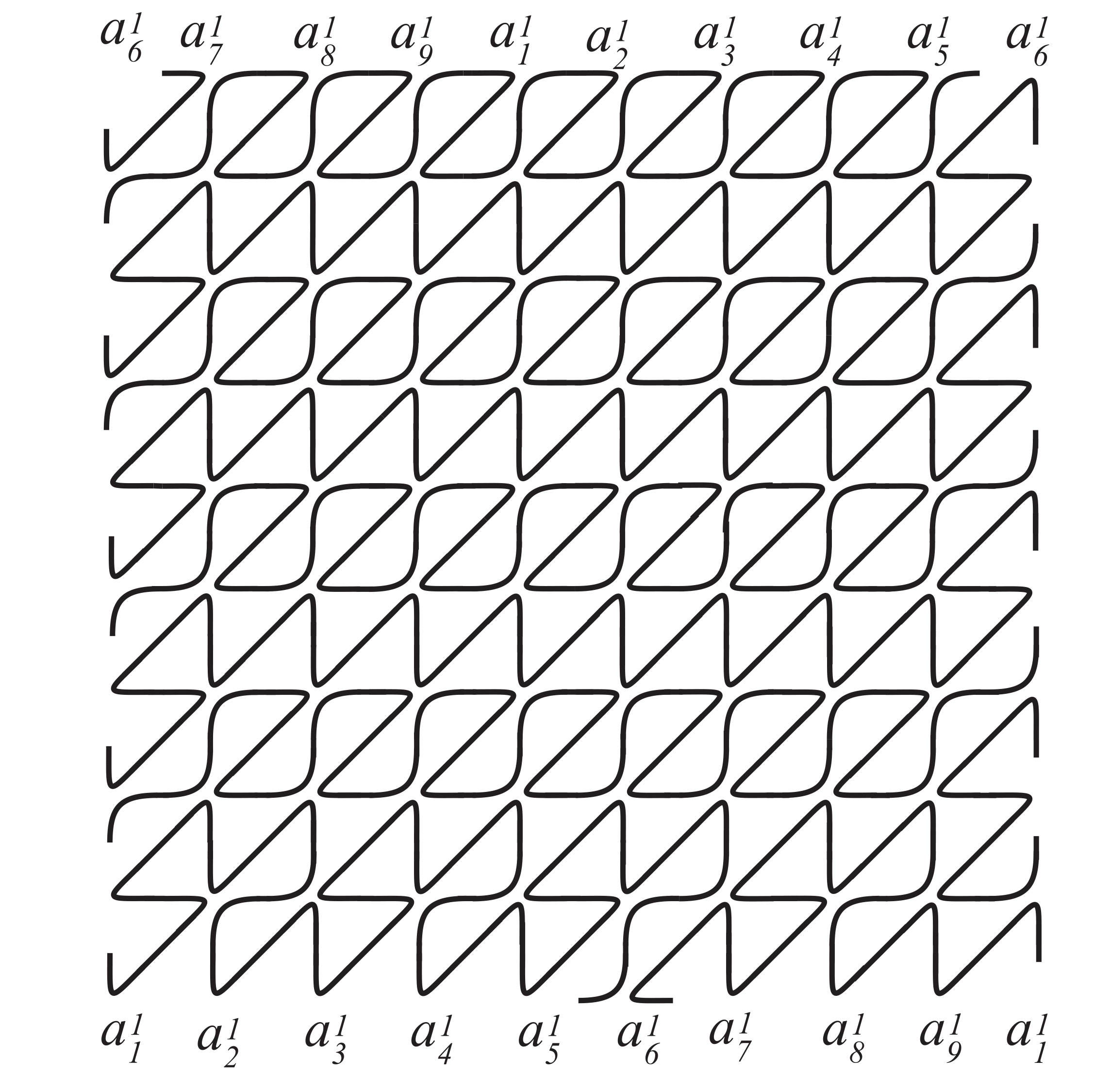}
    \caption{The A-trail constructed in Theorem \ref{thm:oddv} for $T_6^{81,9}$.}
    \label{fig:trichar}
\end{figure}

In light of the application, we record the following corollary.

\begin{corollary} A non-SAH triangular grid with $v$ vertices is origami constructible if and only if $v$ is odd.
\end{corollary}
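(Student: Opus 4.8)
The plan is to derive this corollary as an essentially immediate consequence of Theorem \ref{thm:oddv} together with the earlier equivalence that, for checkerboard-colorable torus graphs, being origami constructible and being smoothly Eulerian are the same property. First I would recall that directly from Altshuler's representation, every triangular grid is checkerboard-colorable (this was noted just before Theorem \ref{thm:oddv}). Hence, by the theorem asserting that a checkerboard-colorable graph on the torus is origami constructible if and only if it is smoothly Eulerian, a triangular grid $G$ is origami constructible precisely when it contains an A-trail.

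Next I would invoke Theorem \ref{thm:oddv}: for a triangular grid $G$ with $v$ vertices that is \emph{not} SAH, $G$ is smoothly Eulerian if and only if $v$ is odd. Chaining the two equivalences gives: a non-SAH triangular grid $G$ with $v$ vertices is origami constructible $\iff$ $G$ is smoothly Eulerian $\iff$ $v$ is odd. That is the statement of the corollary.

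There is essentially no obstacle here — the corollary is a restatement of Theorem \ref{thm:oddv} in the language of origami constructibility, and the only thing to be careful about is citing the correct bridging result, namely the theorem that for checkerboard-colorable torus graphs the two notions coincide (which in turn rested on the corollary that any A-trail in such a graph lies in a disk and hence is unknotted). I would write the proof as a two-sentence deduction. Concretely:

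\begin{proof}
Every triangular grid is checkerboard-colorable, as is immediate from its Altshuler representation. Hence, by the theorem characterizing origami constructibility of checkerboard-colorable torus graphs, a triangular grid is origami constructible if and only if it is smoothly Eulerian. If $G$ is a non-SAH triangular grid with $v$ vertices, Theorem \ref{thm:oddv} tells us $G$ is smoothly Eulerian if and only if $v$ is odd. Combining these two equivalences yields the claim.
\end{proof}
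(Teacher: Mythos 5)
Your proposal is correct and matches the paper's intended argument: the paper notes just before Theorem \ref{thm:oddv} that triangular grids are checkerboard-colorable (from the Altshuler representation) and hence origami constructible iff smoothly Eulerian, so the corollary is exactly Theorem \ref{thm:oddv} restated through that equivalence. Your two-sentence deduction is the same chain of citations the paper relies on.
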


The preceding theorem and corollary yield both an infinite family of origami constructible torus triangulations (non-SAH grids with an odd number of vertices) as well as an infinite family of non-origami constructible, non-smoothly Eulerian torus triangulations. Lastly, we note that existence of A-trails in SAH grids can depend on $m$: it is straightforward to check that $T_2^{5,1}$ is smoothly Eulerian while $T_1^{5,1}$ is not.

\section{Origami construction of rectangular torus grids.} 

The rectangular torus grids have similar representations to the Altshuler representation of triangular grids \cite{GERMAN2002}. However, since not all rectangular grids are checkerboard-colorable, we will need to restrict our attention to certain spatial embeddings. As an example of what can happen if we do not, we provide in Figure \ref{fig:1vert} two embeddings of a one-vertex graph with two loops on a torus. On the level of cellularly embedded graphs, these embeddings are equivalent and we can move from one embedding to the other by Dehn twists, i.e. cutting the torus open along some simple closed curve with tubular neighborhood, twisting, and gluing the torus back together (see e.g. \cite{HUMPHRIES} for details). With the torus embedded in $\mathbb{R}^3$, however, these twists affect the isotopy classes of curves on the torus. Indeed, in the first embedding all A-trails are knotted, while in the second all A-trails are unknotted.

\begin{figure}[ht!]
    \centering
   
    \begin{subfigure}[b]{0.5\columnwidth}
        \centering
        \includegraphics[scale=0.1]{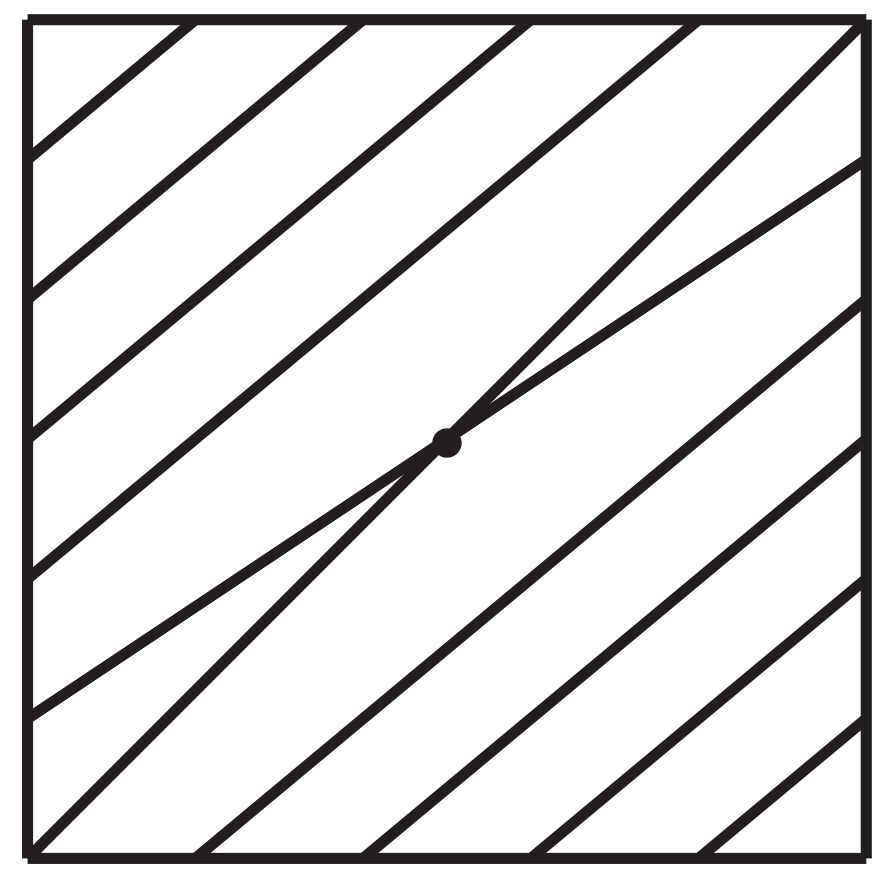}
        \caption{A $(4,4)$-regular grid with $1$ vertex.}
        \label{subfig:1vert_1}
    \end{subfigure}%
    \begin{subfigure}[b]{0.5\columnwidth}
        \centering
        \includegraphics[scale=0.1]{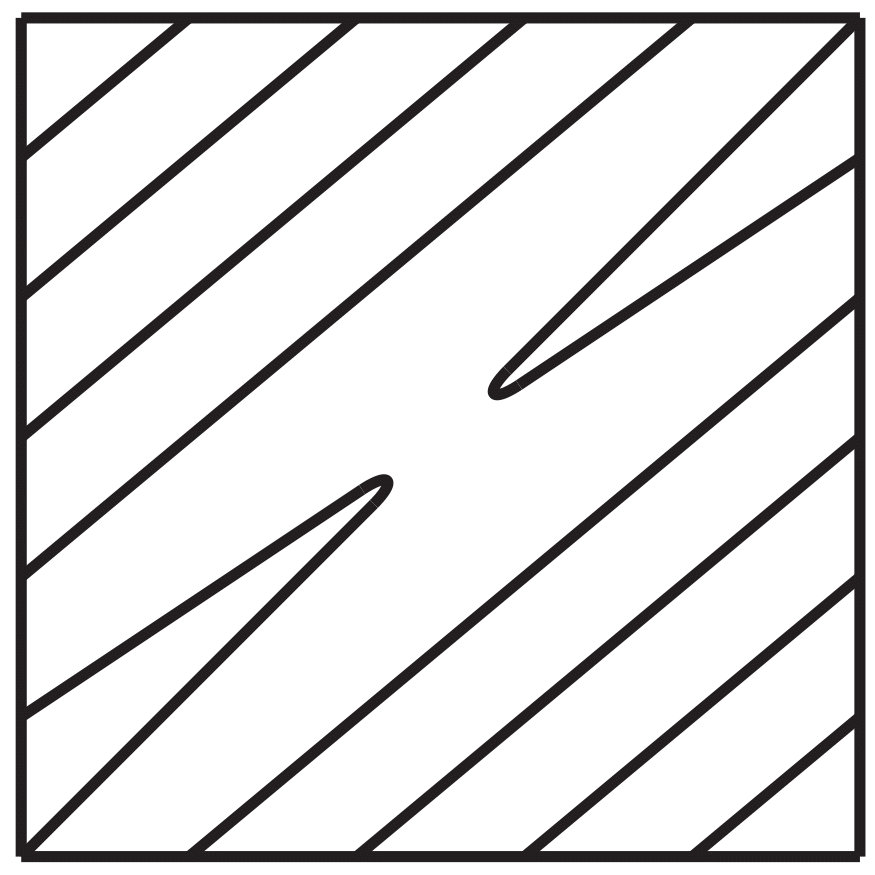}
        \caption{The first A-trail, forming a $(4,3)$ knot.}
        \label{subfig:1vert_2}
    \end{subfigure}
    
    \begin{subfigure}[b]{0.5\columnwidth}
        \centering
        \includegraphics[scale=0.1]{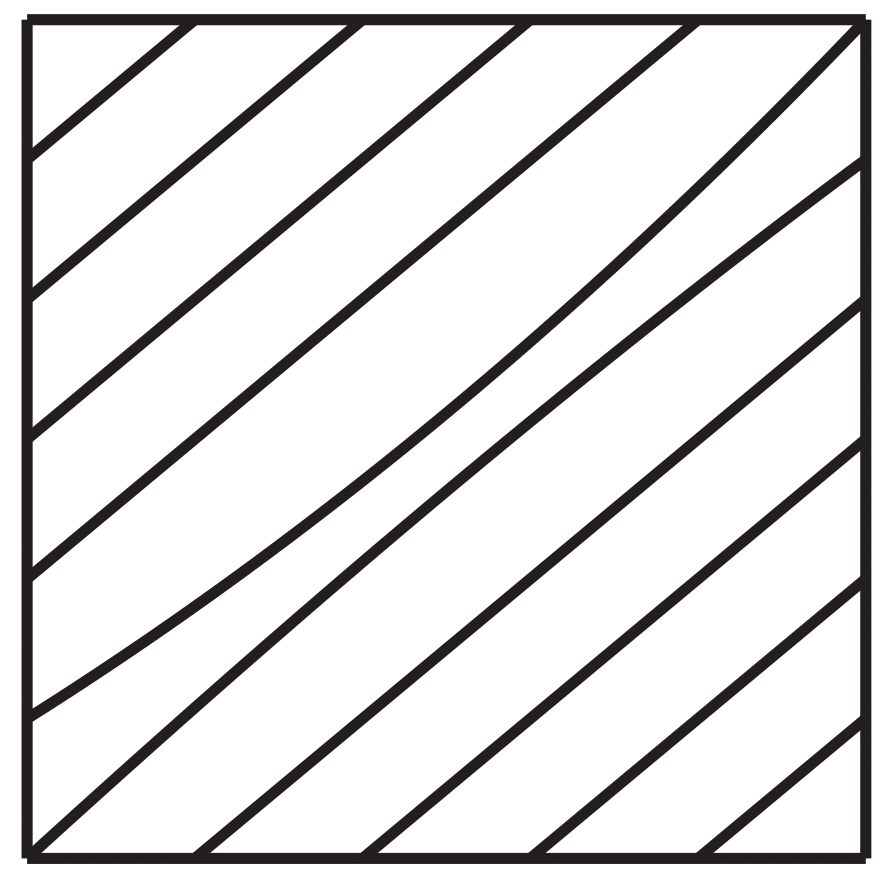}
        \caption{The second A-trail, forming a $(6,5)$ knot.}
        \label{subfig:1vert_3}
    \end{subfigure}%
    \begin{subfigure}[b]{0.5\columnwidth}
        \centering
        \includegraphics[scale=0.1]{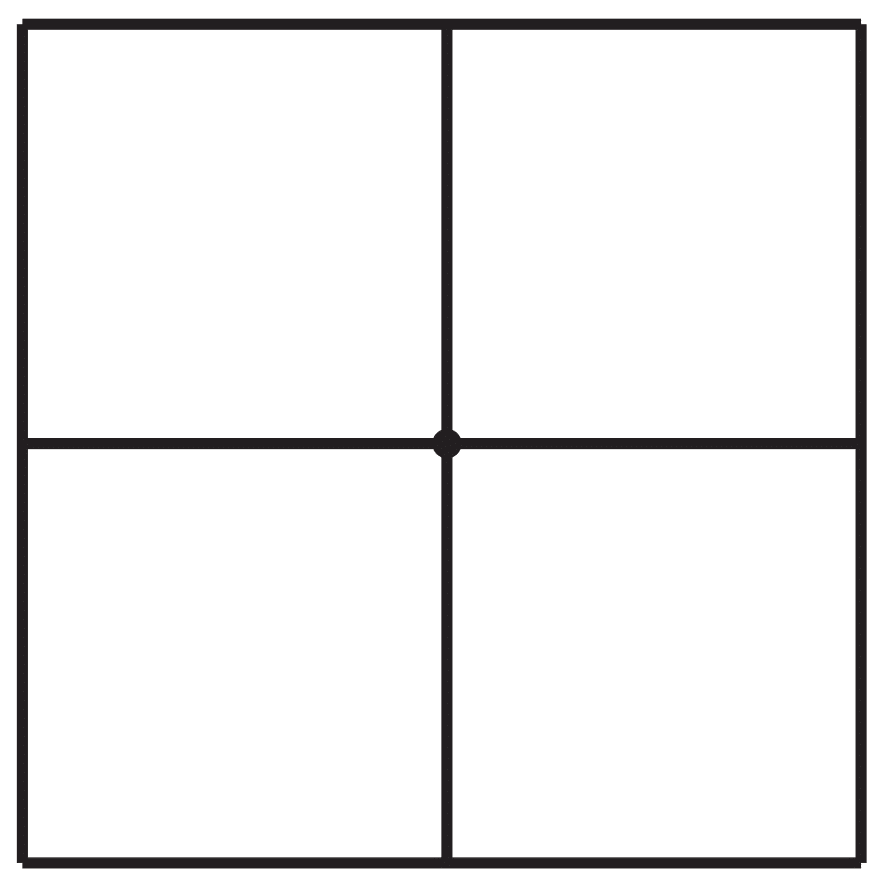}
        \caption{A different embedding of the same grid, \phantom{TTT} here both A-trails are unknotted.}
        \label{subfig:1vert_4}
    \end{subfigure}
    \caption{Two embeddings of a $(4,4)$-regular grid with one vertex. In the first embedding, every A-trail forms a knot; in the second, every A-trail is unknotted.}
    \label{fig:1vert}
\end{figure}

\newpage

\begin{definition} For $i,j>1$, the \emph{rectangular torus grid of width $i$ and height $j$} is the embedding $R_{i,j}$ of $C_i \times C_j$ on the torus as in Figure \ref{fig:c7xc4}, with the torus standardly embedded in $\mathbb{R}^3$.
\end{definition}

\begin{figure}[ht!]
\centering
\includegraphics[scale=.15]{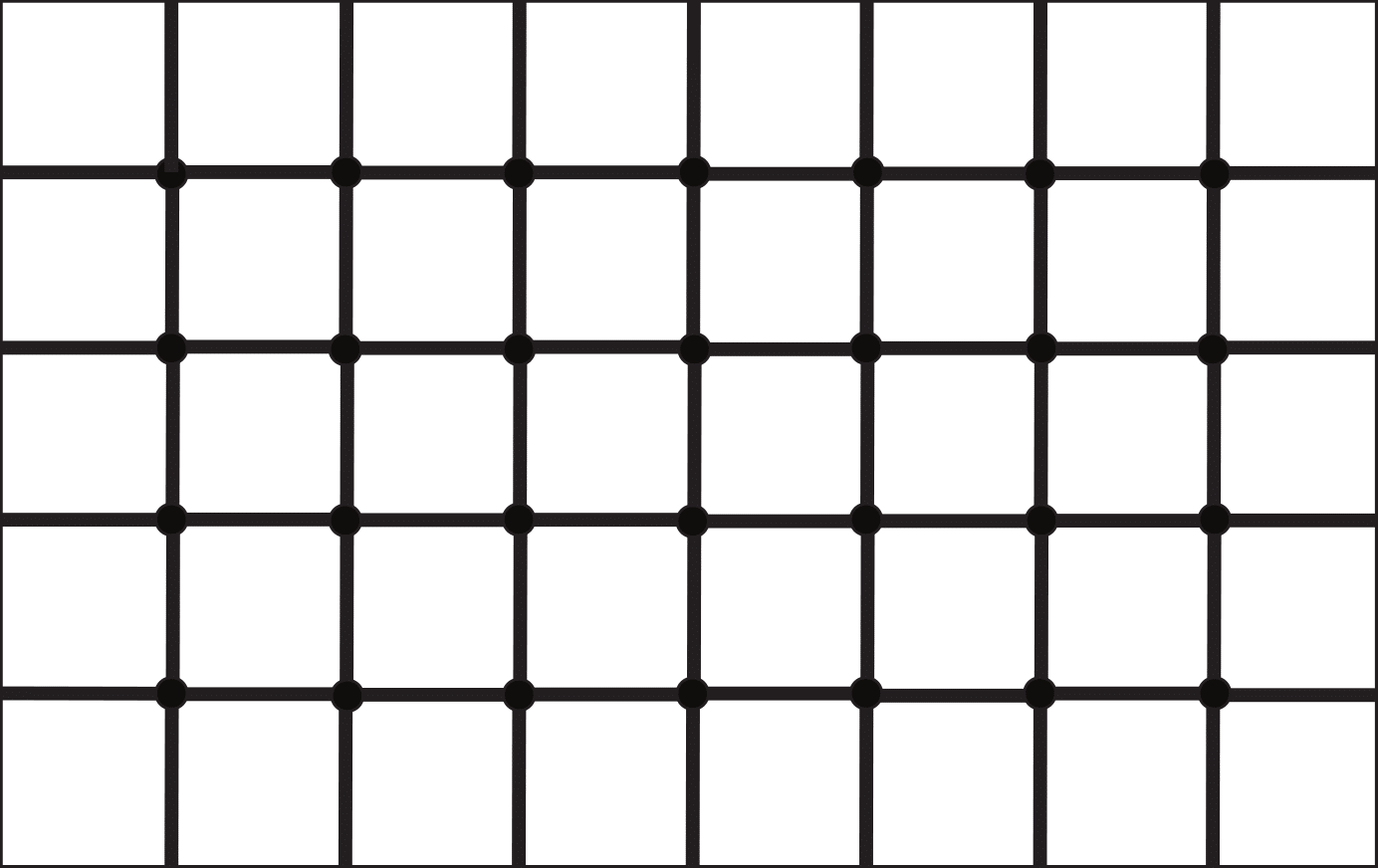}
\caption{The rectangular torus grid $R_{7,4}$.}
\label{fig:c7xc4}
\end{figure}

While not every rectangular grid $R_{i,j}$ is checkerboard-colorable, every one contains an unknotted A-trail.

\begin{theorem}\label{thm:unknot_rect}
Every grid $R_{i,j}$ contains an unknotted A-trail.
\end{theorem}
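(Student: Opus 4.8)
The plan is to reduce knottedness to a homology computation on the torus and then build an A-trail whose homology class is ``thin.'' After smoothing, an A-trail $A$ of a torus graph is an embedded simple closed curve $\mathcal{L}(A)$ on the standardly embedded torus, so it is either null-homotopic or the $(p,q)$-torus knot for a primitive class $(p,q)=[\mathcal{L}(A)]\in H_1(T^2)$, with coordinates taken against a canonical longitude and meridian. Since the $(p,q)$-torus knot is the unknot whenever $p$ or $q$ lies in $\{0,\pm 1\}$, it suffices to produce, for every $i,j>1$, an A-trail of $R_{i,j}$ whose class has a coordinate in $\{0,\pm 1\}$. One reads off $[\mathcal{L}(A)]$ by signed counting: the longitudinal coordinate is the algebraic intersection number of $\mathcal{L}(A)$ with a canonical meridian, i.e.\ the net number of horizontal edges in one fixed column traversed left-to-right, and symmetrically for the meridional coordinate. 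When $i$ and $j$ are both even $R_{i,j}$ is checkerboard-colorable, so by the corollary that every A-trail of a checkerboard-colorable torus graph lies in a disk, any A-trail there already has class $(0,0)$; in this case one needs only to exhibit an A-trail, e.g.\ via Theorem~\ref{thm:covtree} by writing down an explicit covering tree of the red graph (a ``comb'' works).

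For the remaining parities I would assemble an A-trail from a small repertoire of everywhere-zigzag local patterns. Since an A-trail must turn at every vertex, the usable patterns are limited: choosing the same smooth transition at all vertices decomposes $R_{i,j}$ into $\gcd(i,j)$ parallel diagonal circuits, while a zigzag pattern confined to two consecutive columns produces a single meridional circuit of class $(0,\pm 1)$ when the strip height is even (and symmetrically for two-row strips when the width is even). I would cover all $2ij$ edges with such blocks and then merge the resulting circuits into one by flipping the transition at a carefully chosen set of vertices, each flip at a vertex lying on two distinct current circuits amalgamating them. The key bookkeeping fact is that merging circuits of classes $c_1,c_2$ yields a circuit of class $c_1\pm c_2$, while a simple closed curve on the torus cannot represent a non-primitive class; hence repeated merges of parallel meridional loops collapse to classes $(0,0)$ or $(0,\pm1)$, and by arranging that only such loops together with one ``seam'' loop are involved, the final longitudinal coordinate stays pinned at $0$ or $\pm 1$, so $\mathcal{L}(A)$ is unknotted.

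The main obstacle, and where essentially all the effort goes, is producing one concrete such assembly for each parity of $(i,j)$ and verifying that the constructed transition system consists of a single circuit. The cases where $i$ is even or $j$ is even are the cleanest, since the two-row or two-column zigzag blocks close up within a strip. When $i$ and $j$ are both odd these blocks do not close up within a strip but instead drift by one column per vertical pass, so one must track a spiralling circuit and the connectivity argument is more delicate; the very small grids $R_{2,j}$ and $R_{i,2}$, where $C_2$ contributes parallel edges, should be checked by hand. An alternative organization that localizes the difficulty is an induction on $i+j$: insert a two-column (resp.\ two-row) strip into $R_{i,j}$ to obtain $R_{i+2,j}$ (resp.\ $R_{i,j+2}$), reroute the existing A-trail through the new vertices using a meridional zigzag loop, and merge it in with a single transition flip; the inductive step changes the homology class by $0$ or $\pm(0,1)$, preserving the small-coordinate property, and leaves only $R_{2,2}$, $R_{2,3}$, $R_{3,2}$, $R_{3,3}$ to be handled directly. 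Either way, the crux is confirming single-circuit-ness of the transition system produced.
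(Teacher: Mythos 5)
Your overall strategy is sound, and it genuinely differs from the paper's: the paper simply exhibits explicit A-trails for the three parity classes in Figure \ref{fig:unknot} (placing vertices on the boundary of the fundamental square) and leaves the generalization to the reader, whereas you reduce unknottedness to the homological statement that a simple closed curve on the standardly embedded torus is unknotted once one of its two coordinates lies in $\{0,\pm 1\}$. That framing is a nice way to \emph{certify} unknottedness of whatever trail one constructs, and your even--even case is complete and arguably cleaner than the paper's picture: checkerboard-colorability plus the Section 3 results give unknottedness of \emph{any} A-trail, and existence follows from Theorem \ref{thm:covtree} since a spanning tree of the diagonal adjacency graph of red faces yields a covering tree (your ``comb'' is one such choice).

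The genuine gap is that for the parities where $i$ or $j$ is odd you never actually produce an A-trail. Both the zigzag-block-plus-merge scheme and the strip-insertion induction are left as plans, and you yourself identify verifying single-circuitness as ``the crux''; but that verification \emph{is} the content of the theorem in these cases (your homology bookkeeping then finishes unknottedness), so as written the argument stops exactly where the paper supplies its explicit patterns, e.g.\ the $3\times 5$ and $3\times 6$ trails of Figure \ref{fig:unknot}. Moreover the proposed induction has a concrete unaddressed obstacle: when you insert a two-column strip into $R_{i,j}$, the existing A-trail cannot ``pass straight through'' the new vertices, since pairing opposite half-edges at a $4$-valent vertex is not a smooth transition. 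Every crossing of the seam must turn at both new vertices, so the rerouting necessarily interacts with the new vertical edges and can split the trail into several circuits; controlling this (and the relative orientations in your $c_1\pm c_2$ merge bookkeeping, plus the parallel-edge cases $i=2$ or $j=2$) requires exactly the explicit constructions you have deferred. To repair the proof you would need to write down, for each parity, one concrete transition system (as the paper's figures do, or as your two-column/two-row zigzag blocks could after a careful connectivity argument) and verify it is a single circuit whose class has a coordinate in $\{0,\pm 1\}$.
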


\begin{proof} We give examples of three cases in Figure \ref{fig:unknot}, and leave generalization of these to the reader. It is most convenient here to place the vertices of $R_{i,j}$ on the boundaries of the fundamental square representing the torus (so, in particular, all four corners are identified to a vertex.)
\end{proof}

\begin{figure}[ht]
\centering
    \begin{subfigure}[b]{0.3\columnwidth}
        \centering
        \includegraphics[scale=.1]{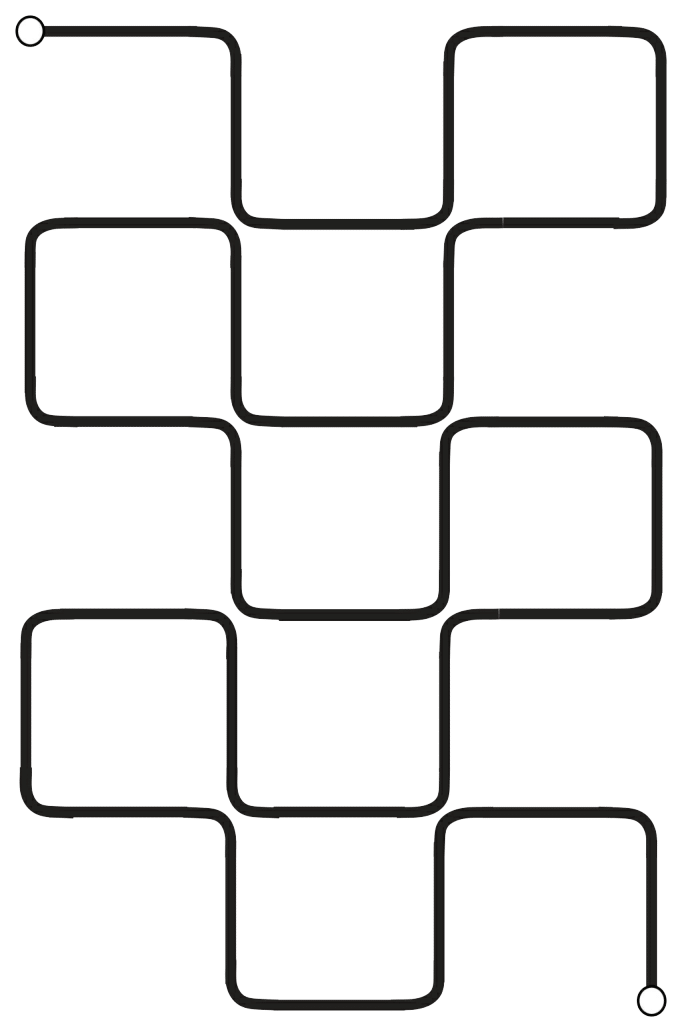}
        \caption{$3\times5$}
        \label{subfig:OddParity}
    \end{subfigure}%
    \begin{subfigure}[b]{0.3\columnwidth}
        \centering
        \includegraphics[scale=.1]{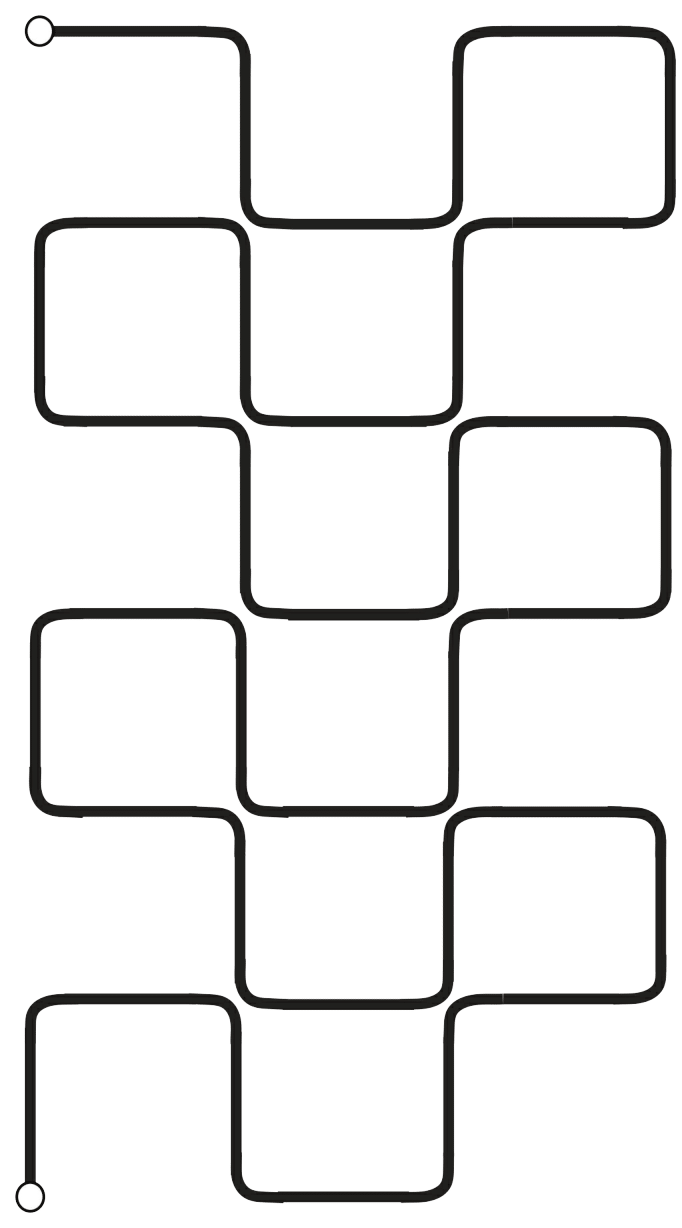}
        \caption{$3\times6$}
        \label{subfig:OddEven}
    \end{subfigure}%
    \begin{subfigure}[b]{0.3\columnwidth}
        \centering
        \includegraphics[scale=.1]{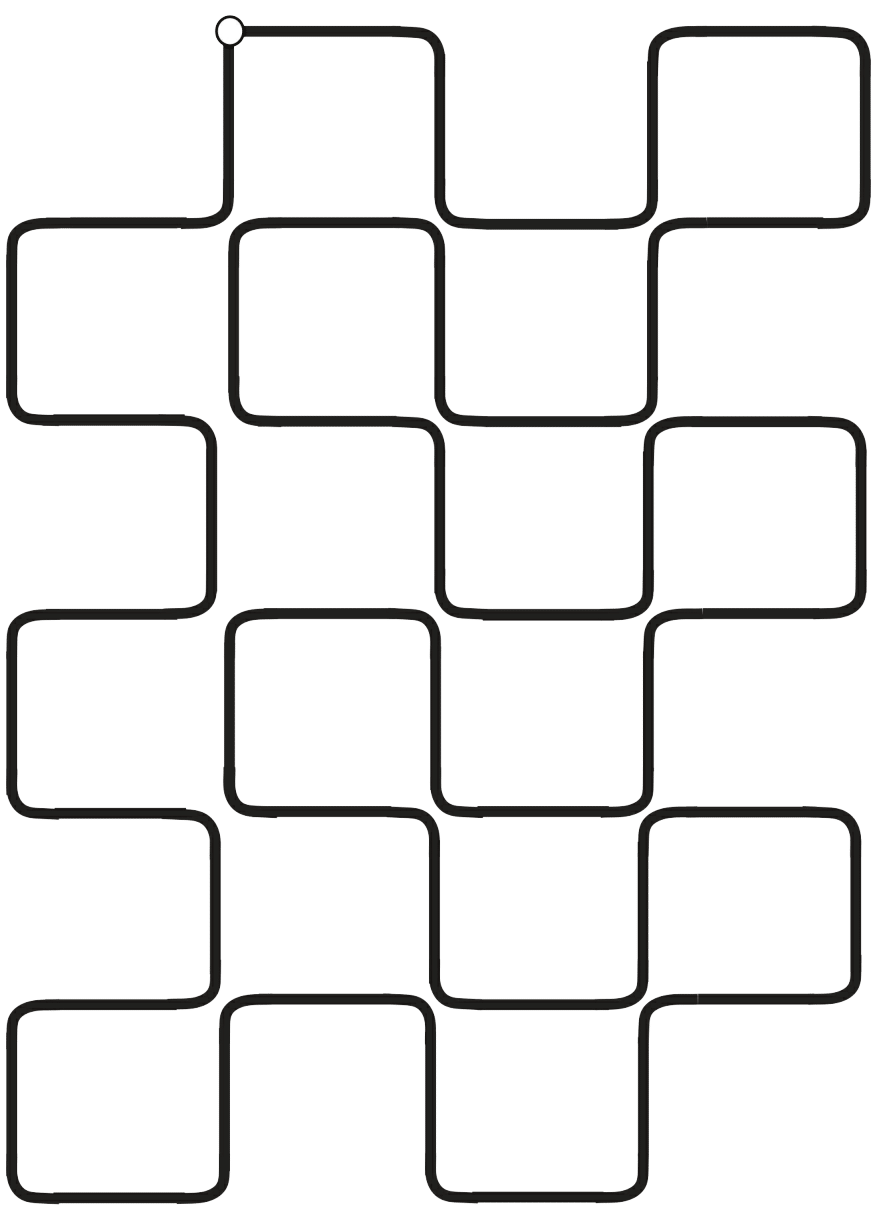}
        \caption{$4\times6$}
        \label{subfig:EvenParity}
    \end{subfigure} 
\caption{Examples of Unknotted A-trails in grids of each parity.}
\label{fig:unknot}
\end{figure}

Note that if $i$ or $j$ is odd, $R_{i,j}$ is not checkerboard-colorable, and hence any A-trails must complete a longitudinal or meridional rotation. The unknotted A-trails presented above for these cases rely, therefore, on being embedded on an unknotted torus. If $i$ and $j$ are both even, $R_{i,j}$ is checkerboard colorable, and the assumption that the torus be unknotted is not necessary.

Since many rectangular grids are not checkerboard-colorable, they may contain A-trails forming nontrivial torus knots as well as circuit decompositions forming nontrivial torus links. A natural question to ask is whether every torus link can be constructed from a smooth circuit decomposition of some $R_{i,j}$.

\begin{theorem} \label{thm:alltoruslinks} Let $L$ be an oriented link embedded on an (unknotted) torus $T$. There exists a rectangular torus grid $R_{i,j}$ having a smooth transition system $T$ isotopic to $L$.
\end{theorem}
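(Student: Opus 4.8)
Since $L$ lies on an unknotted torus, its isotopy type as a link in $\mathbb{R}^3$ depends only on its homology class there: writing $d=\gcd(p,q)$, the link $L$ is the $(p,q)$ torus link, i.e.\ $d$ disjoint parallel copies of a slope-$(p/d,q/d)$ curve in the essential cases, and a single unknot or a $k$-component unlink in the degenerate ones; moreover $(p,q)$, $(q,p)$, and $(-p,-q)$ all yield the same link. The plan is therefore to produce, for each such class, a rectangular grid together with a smooth transition system whose induced circuit decomposition, once smoothed, is a family of $d$ parallel essential curves of the required slope --- equivalently, a smooth circuit decomposition all of whose circuits are essential and of total homology class $(p,q)$.

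For the essential case $|p|,|q|\ge 2$ I would use the \emph{uniform} smooth transition system $\tau$ on $R_{i,j}$ whose smoothing at every vertex connects the north and east half-edges (hence also the south and west ones). Following the transitions shows that the circuits of $\tau$ are $\gcd(i,j)$ pairwise edge-disjoint embedded ``diagonal staircases'', each alternating a horizontal with a vertical edge; since the $2ij$ edges are partitioned among these circuits, each uses exactly $\operatorname{lcm}(i,j)$ edges of each type, and so winds $j/\gcd(i,j)$ times longitudinally and $i/\gcd(i,j)$ times meridionally. Hence, after smoothing, $\tau$ produces $\gcd(i,j)$ parallel $(j/\gcd(i,j),-i/\gcd(i,j))$-curves, that is, the $(j,-i)$ torus link; the mirror smoothing (or a reflection of the grid) gives the other sign patterns. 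A figure will make the staircase structure clear. Given $L$ with $|p|,|q|\ge 2$, take $i=|q|$ and $j=|p|$ (both exceed $1$, as the definition of $R_{i,j}$ requires) and choose the smoothing and, if needed, a reflection so that $\mathcal{L}$ of the resulting decomposition is precisely the $(p,q)$ torus link, which is possible by $(p,q)\cong(q,p)\cong(-p,-q)$.

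For the degenerate cases: if $L$ is a single unknot --- which, as $d=1$, is exactly when $|p|\le 1$ or $|q|\le 1$ --- any rectangular grid works by Theorem~\ref{thm:unknot_rect}. Otherwise $L$ is a $k$-component unlink with $k\ge 2$ (the case $p=0$ or $q=0$); here I would take $R_{2m,2m}$ with $m$ large enough that $2m^2\ge k$, start from the ``all-red'' smooth transition system of the checkerboard coloring (whose circuits are the $2m^2$ null-homotopic boundary cycles of the red faces), and then switch the smoothing at $2m^2-k$ suitably chosen vertices, each switch merging two current null-homotopic circuits into one and never creating an essential curve, leaving exactly $k$ null-homotopic circuits --- the $k$-component unlink. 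The one step requiring genuine care is verifying that $\tau$ has exactly $\gcd(i,j)$ circuits, each an embedded staircase with the stated winding numbers; once that is in hand it is automatic that no circuit is null-homotopic in the essential case (so $\mathcal{L}$ is $L$ itself, with no stray split components) and that the component count is $d=\gcd(p,q)$. The remainder is routine bookkeeping of signs via the symmetries of the grid and of torus links.
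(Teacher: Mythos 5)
Your construction is essentially the paper's: the uniform ``staircase'' smoothing of a rectangular grid whose dimensions match $(|p|,|q|)$ is exactly the smooth circuit decomposition the paper exhibits on $R_{p,q}$ (its Figure of the $(7,4)$ link on $R_{7,4}$), with signs handled by reflection/rotation, and your gcd/lcm component count just makes explicit what the paper leaves to the figure. Your additional treatment of the degenerate cases (unknot via Theorem \ref{thm:unknot_rect}, and unlinks via merging null-homotopic face-boundary circuits of a checkerboard smoothing) covers edge cases the paper's proof passes over silently, and the only loose ends in your write-up (the longitudinal/meridional labeling swap and the choice of merge order keeping circuits null-homotopic) are routine and correctly flagged as such.
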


\begin{proof} Suppose $L$ is the $(p,q)$ torus link. If $p,q > 0$, then the smooth circuit decomposition of $R_{p,q}$ given in Figure \ref{fig:relprime} (oriented left-to-right, bottom-to-top) is isotopic  to $L$. Appropriately rotating this construction (and reversing orientation if necessary) handles the case where $p$ or $q$ is negative.
\end{proof}

\begin{figure}[!ht]
    \centering
    
    \begin{subfigure}[b]{0.5\columnwidth}
    \centering
    \includegraphics[scale=.1]{relprime0-1.png}
    \caption{$R_{7,4}.$}
    \label{subfig:relprime0}
    \end{subfigure}%
    \begin{subfigure}[b]{0.5\columnwidth}
    \centering
    \includegraphics[scale=.1]{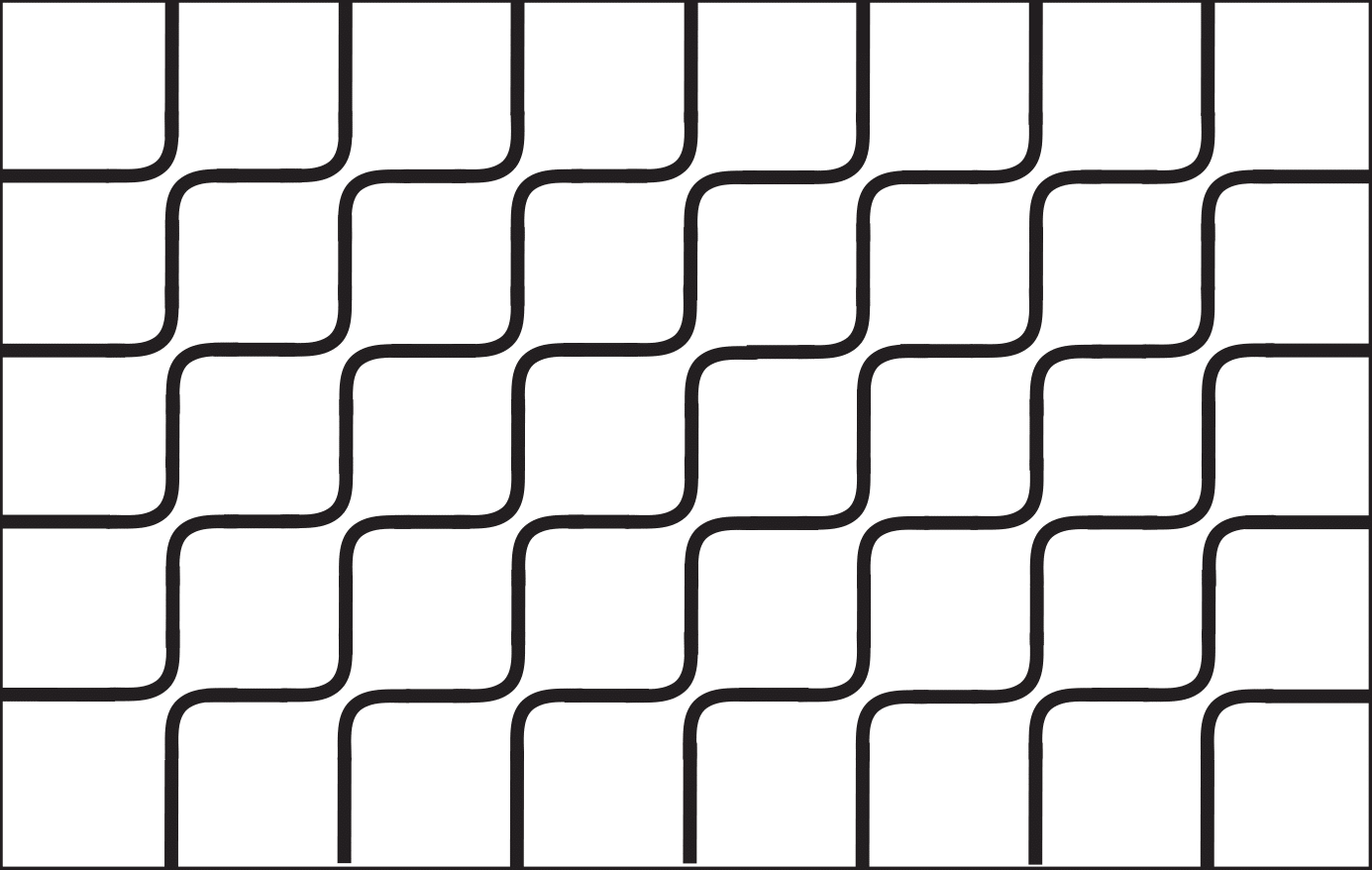}
    \caption{The $(7,4)$ torus link.}
    \label{subfig:relprime1}
    \end{subfigure}
\caption{Finding a $(7,4)$ torus knot on $R_{7,4}$.}
\label{fig:relprime}
\end{figure}

\begin{figure}[h]
    \centering
        \includegraphics[scale=.1]{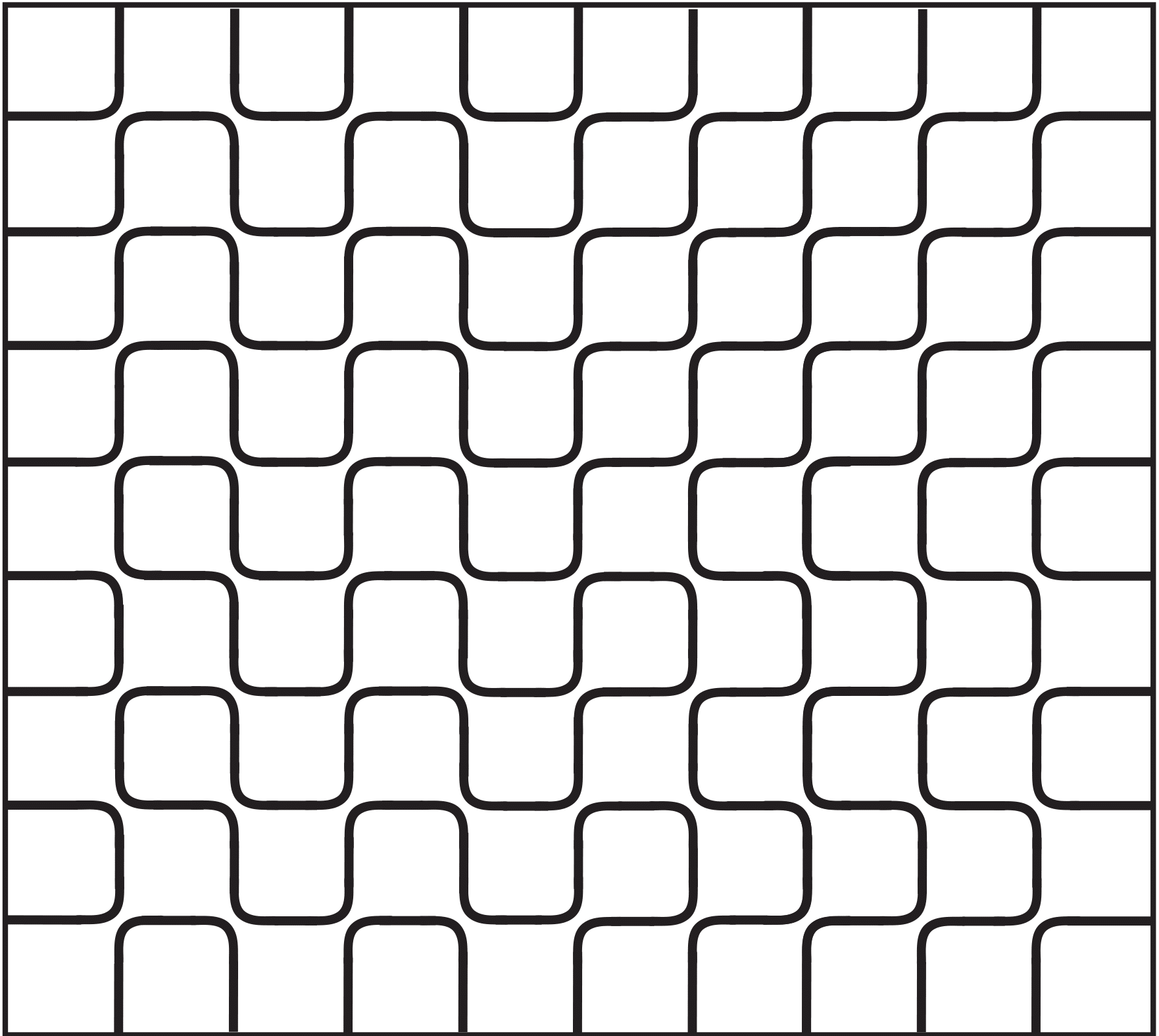}
        \caption{A circuit decomposition of $R_{9,8}$ forming a $(5,4)$ torus knot.}
    \label{fig:circ_decomp}
\end{figure}

We note that each grid $R_{i,j}$ can contain many links other than the $(i,j)$ torus link. For example, we give in Figure \ref{fig:circ_decomp} a circuit decomposition of $R_{9,8}$ forming a $(5,4)$ torus knot. Note that this construction can be straightforwardly modified to produce other torus links by adjusting the number of ``loops'' along the sides (each loop reduces the number of times the resulting link completes meridional or longitudinal rotations by $2$.) Essentially one is shifting the standard representation of of the $(m,n)$-torus link for $m<i$, $n<j$ (with $i-m$ and $j-n$ even) to the $m \times n$ upper-right corner of $R_{i,j}$.

\section{Origami construction of higher-genus grids} While the results above are specific to the torus, using a gluing operation on grids and transition systems we can construct families of grids on higher-genus surfaces containing unknotted A-trails. While we will focus on grids constructed from triangular and rectangular torus grids, we present the basic constructions in more generality, as they can be used for analyzing other target structures. For convenience, we will say a face of an embedded graph is \emph{cyclic} if its boundary walk is a cycle.

\begin{definition} \label{def:composite} Let $\{G_i \}_{i=1}^n$ be a collection of torus grids $G_i$. A \emph{gluing set} for $\{G_i\}$ is a collection $\{(f_{i,1}, f_{i,2}, \pi_i)\}_{i=1}^{n-1}$ such that $f_{i,1}$ is a cyclic face of $G_i$, $f_{i,2}$ is a cyclic face of $G_{i+1}$, $f_{i,2} \cap f_{i+1,1} = \emptyset$, and $\pi_i: \partial f_{i,1} \to \partial f_{i,2}$ is an isomorphism of cycle graphs. Remove the interior of each face $f_{i,j}$ and glue $\partial f_{i,1}$ to $\partial f_{i,2}$ according to the isomorphism $\pi_i$. The resulting embedded graph $G$ is a $\{G_i\}$-composite grid of genus $n$. We also say simply that $G$ is a \emph{$\{G_i\}$-composite by $\{ (f_{i,1}, f_{i,2}, \pi_i) \}$}. The individual grids $G_i$ are called \emph{component grids.}
\end{definition}

Composite grids of a fixed collection $\{G_i\}$ are not unique; distinct grids can be formed by choosing different faces to identify. We also note that knottedness of A-trails on composite grids may depend on the embedding of the resulting genus $n$ surface $\Sigma$ in $\mathbb{R}^3$. We will therefore assume throughout that $G$ embedded on $\Sigma$ is embedded in a standard way, as in Figure \ref{fig:standard-torus}.

\begin{figure}
    \centering
    \includegraphics[scale=5]{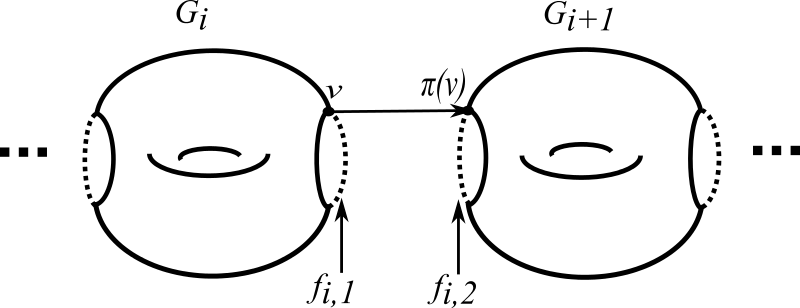}
    \caption{Standard embedding of a composite grid.}
    \label{fig:standard-torus}
\end{figure}

We record some basic properties of composite grids below.

\begin{lemma} Let $G$ be a $\{G_i\}_{i=1}^n$-composite by $\{ (f_{i,1}, f_{i,2}, \pi_i) \}$.
\begin{enumerate}
    \item $G$ is cellularly embedded on the $n$-torus.
    \item If $G_i$ is a triangular (resp. rectangular) grid for each $i$, then $G$ is a triangular (resp. rectangular) grid.
    \item If $G_i$ is Eulerian for each $i$,  then $G$ is Eulerian.
\end{enumerate}
\end{lemma}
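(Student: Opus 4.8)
The plan is to prove the three items separately, each reducing to a local analysis of a single gluing together with a global count. Write $\Sigma_i$ for the torus carrying $G_i$ and $\ell_i$ for the length of the cycle $\partial f_{i,1}\cong\partial f_{i,2}$.

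\emph{(1).} Since $G_i$ is cellularly embedded, $f_{i,j}$ is an open disk, and since $\partial f_{i,j}$ is a genuine cycle its closure is a closed disk meeting $G_i$ exactly in the circle $\partial f_{i,j}$; hence deleting the interior of $f_{i,j}$ leaves a compact surface with a single boundary circle. Thus the $i$-th gluing is exactly a connected-sum operation, and performing the $n-1$ gluings in sequence --- the hypothesis $f_{i,2}\cap f_{i+1,1}=\emptyset$ guarantees each gluing face survives the previous gluing --- realizes $G$ cellularly on the connected closed surface $\Sigma_1\#\cdots\#\Sigma_n$, which is orientable since the standard embedding of Figure~\ref{fig:standard-torus} realizes it in $\mathbb{R}^3$. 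To identify this surface I would track $V,E,F$: identifying $\partial f_{i,1}$ with $\partial f_{i,2}$ merges $\ell_i$ vertices and $\ell_i$ edges and destroys the two faces $f_{i,1},f_{i,2}$, so each gluing lowers the Euler characteristic by exactly $2$; since each $\Sigma_i$ is a torus this gives $\chi=0-2(n-1)=2-2n$, so by the classification of surfaces the carrying surface is the $n$-torus. Cellularity persists because the seam cycles $\partial f_{i,1}$ are cycles of $G$, so no two faces of distinct components merge, and every face of $G$ is one of the un-deleted open-disk faces of some $G_i$.

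\emph{(2).} The claim about the geometric dual is immediate from that last observation: every face of $G$ is an un-deleted face of some $G_i$, so every face-boundary walk of $G$ still has length $k$ and is still a cycle, i.e.\ $G^*$ is $k$-regular. For $l$-regularity one examines the vertices lying on a seam: every other vertex of $G$ is a vertex of a single component grid and keeps its degree $l$, while a seam vertex is obtained by identifying $v\in\partial f_{i,1}$ with $\pi_i(v)\in\partial f_{i,2}$, so its degree can be read off the local picture of the gluing. Verifying that a seam vertex again has degree $l$ --- which is what determines how the two gluing cycles, and the edges leaving them, must be matched up --- is the one genuinely combinatorial step; I expect this, together with the topological care in (1), to be the only real obstacle.

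\emph{(3).} Here only parity is relevant. Each $G_i$ is Eulerian, hence (having no isolated vertices) connected with all degrees even, and the chain of gluings keeps $G$ connected. A vertex on no seam keeps its even degree, and a seam vertex formed by identifying $v$ with $\pi_i(v)$ has degree $\deg_{G_i}(v)+\deg_{G_{i+1}}(\pi_i(v))$ reduced by an even number (twice the number of identified edge-pairs at the vertex), hence again even. By the standard characterization of Eulerian graphs --- at most one nontrivial component and all degrees even --- $G$ is Eulerian.
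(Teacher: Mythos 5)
The paper records this lemma without proof, so there is nothing to compare against directly; judged on its own merits, your parts (1) and (3) are essentially correct. The Euler characteristic bookkeeping in (1) is right (each gluing kills two faces and merges $\ell_i$ vertices with $\ell_i$ edges, dropping $\chi$ by $2$), cellularity is correctly reduced to the observation that every face of $G$ is an un-deleted disk face of some $G_i$, and the parity-plus-connectivity argument in (3) works. (One small slip in (3): identifying a pair of boundary edges incident to a seam vertex merges two half-edges into one, so the degree of a seam vertex is $\deg_{G_i}(v)+\deg_{G_{i+1}}(\pi_i(v))-2$, i.e.\ the reduction is the number of identified edge-pairs at the vertex, not twice it; the parity conclusion is unaffected since that number is $2$.)

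The genuine gap is in (2), precisely at the step you deferred. The verification you ``expect'' to go through --- that a seam vertex again has degree $l$ --- is false: by the count above a seam vertex has degree $2l-2$, so a composite of triangular grids has seam vertices of degree $10$ and a composite of rectangular grids has seam vertices of degree $6$ (visible, for instance, in the composite of two copies of $R_{4,4}$ in Figure \ref{fig:fig_8}). No amount of care in matching the gluing cycles can repair this; indeed the Euler-characteristic relation $2k=(k-2)l$ used after Definition \ref{def:refinable} shows that $(3,6)$- and $(4,4)$-regular grids exist only when $\chi=0$, so a literally $(3,6)$- or $(4,4)$-regular grid on the $n$-torus with $n\geq 2$ does not exist at all. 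The statement of (2) must therefore be read in the weaker ``mesh'' sense: every face of the composite is a triangle (resp.\ quadrilateral), equivalently $G^{*}$ is $3$- (resp.\ $4$-) regular, which is all the later sections use. Under that reading, the first sentence of your (2) --- every face of $G$ is an un-deleted cyclic face of some $G_i$, hence still a $k$-gon --- already constitutes the whole proof, and the attempted $l$-regularity argument should simply be dropped rather than completed.
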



To analyze A-trails in composite grids, we will require the following construction of composite A-trails.

\begin{definition}
For an embedded graph $G$ and vertex $v$ lying on a face $f$, we say a transition at $v$ is an \emph{$f$-transition} if it pairs the half-edges of $f$ incident to $v$. Let $G_1$ and $G_2$ be torus graphs and let $T_i$ be a smooth transition system on $G_i$. We say that $T_1$ and $T_2$ are \emph{compatible} if there exist cyclic faces $f_1$ of $G_1$ and $f_2$ of $G_2$ and a isomorphism $\pi: \partial f_1 \to \partial f_2$ such that $T_1$ is an $f_1$-transition at $v \in \partial f_1$ if and only if $T_2$ is not an $f_2$ transition at $\pi(v)$. In this case the triple $(f_1,f_2,\pi)$ \emph{exhibits compatibility} of $T_1$ and $T_2$. \end{definition}

Given compatible smooth transition systems, by ``forgetting'' when one transition system pairs half-edges along the face being glued we can construct a smooth transition system in the resulting composite. 

\begin{definition} Under the notation of the previous definition, and assuming $(f_1,f_2,\pi)$ exhibits compatibility of $T_1$ and $T_2$, let $G$ be the composite of $G_1$ and $G_2$ formed by gluing $f_1$ and $f_2$ according to $\pi$. Let $C$ be the cycle resulting from gluing $f_1$ and $f_2$. Let $v \in C$ with $v_1$ and $v_2$ its preimages in $f_1$ and $f_2$ respectively. Let $t_1$ be the $T_1$ transition at $v_1$ and let $t_2$ be the $T_2$ transition at $v_2$. The \emph{connected sum} $t_1 \# t_2$ of $t_1$ and $t_2$ is the transition at $v$ in $G$ constructed as follows (see Figure \ref{fig:comp_smooths}). Any pair of half edges in $t_1$ or $t_2$ not containing a half-edge in $f_1$ or $f_2$ respectively remains a pair in $t_1 \# t_2$ at $v$. By compatibility, precisely one of the transitions $t_i$ pairs the half-edges $h$ and $h'$ of $f_i$ with half-edges $s$ and $s'$ not on $f_i$. In $t_1 \# t_2$, we pair $s$ and $s'$ with the half-edges of $C$ whose preimages in $f_i$ are $h$ and $h'$. By construction, $t_1 \# t_2$ is a smooth transition at $v$.
\end{definition}

\begin{figure}[ht!]
    \centering
    \begin{subfigure}[b]{0.5\columnwidth}
       \centering
        \includegraphics[scale=0.15]{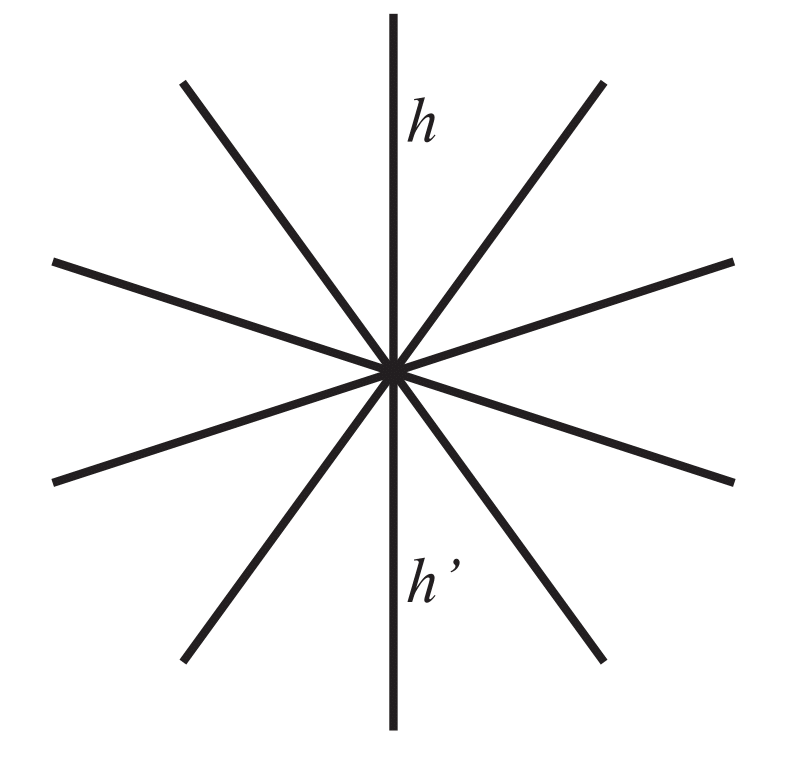}
        \caption{The vertex $v$ and its incident half-edges.  }
        \label{subfig:comp_smooths_1}
    \end{subfigure}%
    \begin{subfigure}[b]{0.5\columnwidth}
        \centering
        \includegraphics[scale=0.15]{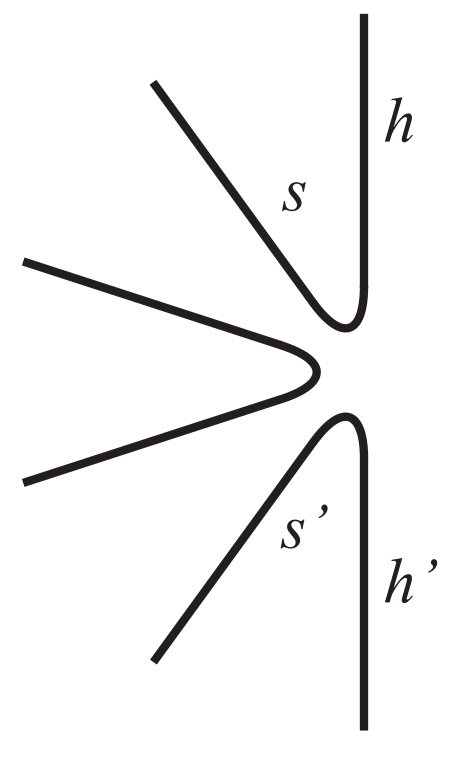}
        \caption{Smoothing $v_1$ according to $t_1$.}
        \label{subfig:comp_smooths_2}
    \end{subfigure}
    
    \begin{subfigure}[b]{0.5\columnwidth}
        \centering
        \includegraphics[scale=0.15]{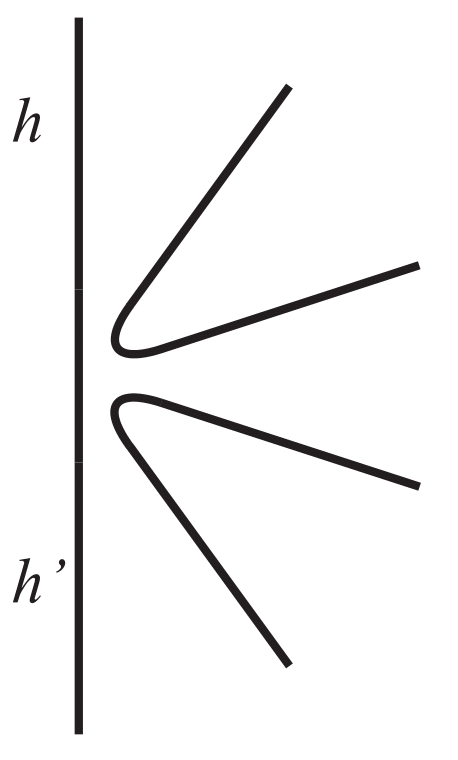}
        \caption{Smoothing $v_2$ according to $t_2$.}
        \label{subfig:comp_smooths_3}
    \end{subfigure}%
    \begin{subfigure}[b]{0.5\columnwidth}
        \centering
        \includegraphics[scale=0.15]{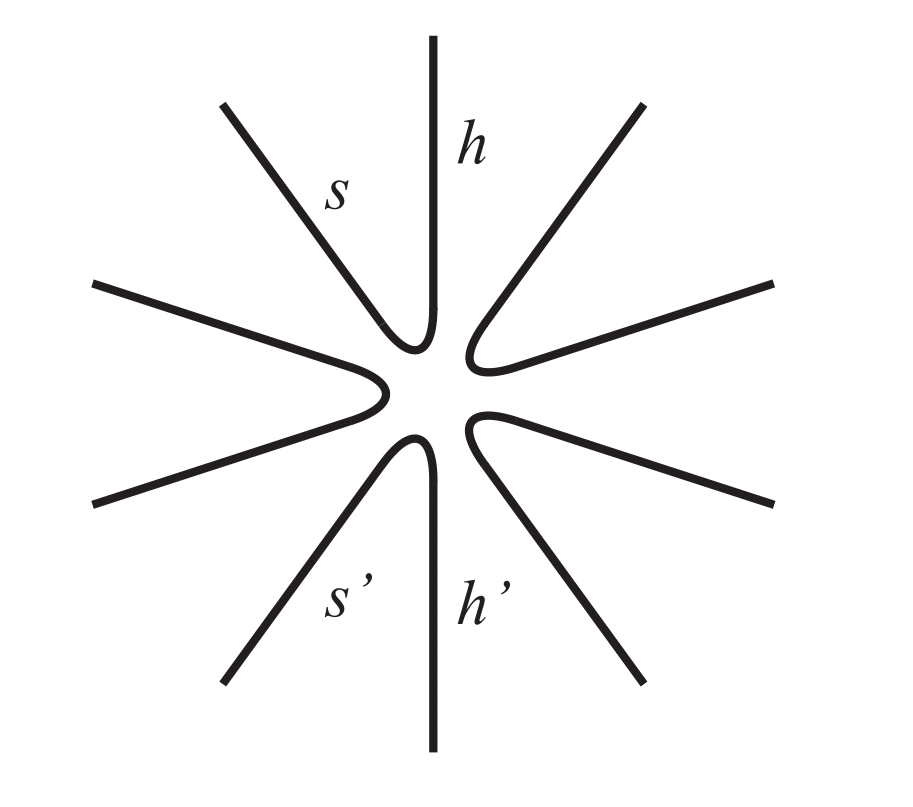}
        \caption{The transition $t_1 \# t_2$.}
        \label{subfig:comp_smooths_4}
    \end{subfigure}
    
    \caption{Forming the transition $t_1 \# t_2$ at a vertex $v$ of a composite grid.}
    \label{fig:comp_smooths}
\end{figure}

We can now define connected sums of transition systems for any number of torus graphs.

\begin{definition} Let $\{G_i\}_{i=1}^n$ be a collection of torus graphs with gluing set $\Gamma = \{ (f_{i,1}, f_{i,2},\pi_i) \}_{i=1}^{n-1}$. A set $\mathbb{T} = \{ T_i: T_i \text{ is a smooth transition of } G_i\}$ is \emph{compatible with $\Gamma$} if for each $1 \leq i < n$ the triple $(f_{i,1}, f_{i,2}, \pi_i)$ exhibits compatibility of $T_i$ and $T_{i+1}$. The \emph{connected sum} $\# \mathbb{T}$ is the smooth transition system constructed as follows. Any vertex of $G$ that is not formed by gluing vertices of two gluing faces inherits the smooth transition from the appropriate transition system $T_i$. Now suppose $v$ is a vertex formed by gluing $f_{i,1}$ to $f_{i,2}$. The transition at $v$ is taken to be the connected sum of the transitions at the preimages of $v$ in $T_i$ and $T_{i+1}$.
\end{definition}

Any smooth transition system of a composite grid can be written as the connected sum of smooth transition systems of the component grids.

\begin{theorem} \label{thm:connsum}
Let $G$ be a $\{G_i\}$-composite by $\{(f_{i,1},f_{i,2}, \pi_i)\}$. Let $T$ be a smooth transition system on $G$. Then there exists a set of smooth transition systems $\mathbb{T}$ of the component grids such that $T = \#\mathbb{T}$.
\end{theorem}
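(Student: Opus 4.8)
The plan is to reconstruct the component transition systems $T_i$ from $T$ by reversing the connected-sum construction one gluing face at a time, and then to verify that the resulting set $\mathbb{T}$ is compatible with $\{(f_{i,1},f_{i,2},\pi_i)\}$ and that its connected sum recovers $T$. First I would observe that each vertex $v$ of $G$ is of exactly one of two types: either it comes from a unique vertex of a unique component grid $G_i$ that is not involved in any gluing at $v$, or it is formed by identifying a vertex $v_1 \in \partial f_{i,1}$ with a vertex $v_2 \in \partial f_{i,2}$ under $\pi_i$ (here I would note that the disjointness condition $f_{i,2} \cap f_{i+1,1} = \emptyset$ in Definition \ref{def:composite} guarantees that no vertex of $G$ is simultaneously a gluing vertex for two different gluing faces of the same component grid, so each gluing vertex is associated to exactly one index $i$ and each glued-up vertex has exactly two preimages). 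For type-one vertices, the transition of $T_i$ is simply defined to be the transition of $T$. For type-two vertices, the task is to split the transition of $T$ at $v$ into a transition $t_1$ at $v_1$ in $G_i$ and a transition $t_2$ at $v_2$ in $G_{i+1}$.

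The key local step is the splitting at a glued vertex $v$ on the cycle $C$ obtained from identifying $\partial f_{i,1}$ with $\partial f_{i,2}$. Here I would argue as follows. The half-edges at $v$ in $G$ partition into the two half-edges $c, c'$ lying along $C$ (each of which is an image of a half-edge of $f_{i,1}$, equivalently of $f_{i,2}$), plus the half-edges coming from $G_i$ not on $f_{i,1}$, plus those from $G_{i+1}$ not on $f_{i,2}$. Since $T$ is smooth, its transition at $v$ pairs cyclically-adjacent half-edges; consulting the local picture (Figure \ref{fig:comp_smooths}), the half-edges $c$ and $c'$ are each paired with something, and because the cyclic order at $v$ in $G$ is the one in which the $G_i$-side half-edges and the $G_{i+1}$-side half-edges occur in two contiguous arcs separated by $c$ and $c'$, each of $c,c'$ is paired either with another $C$-half-edge or with a half-edge on one definite side. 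One then sets, on the $G_i$ side, $t_1$ to be the transition that keeps all pairs of $T$ among $G_i$-side half-edges, and re-routes the pairings involving $c$ or $c'$ back through the half-edges $h,h'$ of $f_{i,1}$ whose images are $c,c'$ — and if after this the two half-edges $h,h'$ of $f_{i,1}$ are left unpaired, one pairs them together (the ``$f_{i,1}$-transition''); symmetrically for $t_2$ on the $G_{i+1}$ side. A short case check on Figure \ref{fig:comp_smooths} shows that exactly one of $t_1,t_2$ ends up being the $f$-transition and the other inherits genuine pairings with off-face half-edges, which is precisely the compatibility condition, and that $t_1,t_2$ are smooth because cyclic adjacency is preserved under passing between $G$ and $G_i$ (the cyclic rotation at $v_1$ in $G_i$ is the restriction of that at $v$ with $c,c'$ replaced by $h,h'$). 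I would then check directly from the definition of $t_1 \# t_2$ that $t_1 \# t_2$ is the original transition of $T$ at $v$: re-routing once and then ``forgetting'' the face-side pairings again is the identity.

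Assembling these local splittings over all gluing faces gives the set $\mathbb{T} = \{T_i\}$, and compatibility of $\mathbb{T}$ with $\Gamma$ follows vertex-by-vertex from the local case analysis just described, so $\#\mathbb{T}$ is defined; that $\#\mathbb{T} = T$ follows because at type-one vertices both sides agree by construction and at type-two vertices we checked $t_1 \# t_2$ recovers the $T$-transition. I expect the main obstacle to be the bookkeeping in the local splitting at a glued vertex: one must be careful that the cyclic orders at $v_1$ in $G_i$ and at $v_2$ in $G_{i+1}$ genuinely reassemble — via $\pi_i$ — into the cyclic order at $v$ in $G$ used to perform the gluing (this is essentially the statement that the composite is cellularly embedded, part (1) of the preceding lemma, applied locally), and that the ``leftover'' face half-edges behave as claimed in every configuration of how $c$ and $c'$ are paired by $T$. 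Once the local picture is pinned down precisely, the global statement is immediate.
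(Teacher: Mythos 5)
Your proposal is correct and follows essentially the same route as the paper: at each glued vertex you reverse the connected-sum construction of Figure \ref{fig:comp_smooths}, using the fact that the two $C$-half-edges separate the $G_i$-side and $G_{i+1}$-side half-edges into contiguous arcs so that (by the parity/case check) both must be paired into the same side, which yields compatibility and $\#\mathbb{T}=T$. The paper's proof is just a terser version of this same argument, citing ``a parity argument'' where you spell out the case analysis.
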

\begin{proof} Let $v$ be a vertex of $G$ formed by gluing a vertex of $G_i$ with a vertex of $G_{i+1}$. Let $t$ be the transition at $v$, and let $h$ and $h'$ be the half-edges at $v$ formed by gluing $f_{i,1}$ to $f_{i,2}$. By a parity argument, both of $h$ and $h'$ are paired by $t$ with half-edges in $G_i$, or both are paired by $t$ with half-edges in $G_{i+1}$. Thus we can reverse the process of Figure \ref{fig:comp_smooths} to obtain smooth transitions $t_i$ and $t_{i+1}$ at $v$ such that $t = t_i \# t_{i+1}$. Doing this at each vertex formed by gluing, and leaving transitions at unglued vertices unchanged, we obtain a list $\mathbb{T}$ of smooth transition systems of the component grids such that $T = \# \mathbb{T}$.
\end{proof}

We now restrict to composites of rectangular and triangular grids.

\begin{theorem} \label{thm:comptri}
   Suppose $G$ is a composite of triangular torus grids $\{G_i\}$ by.$\{(f_{i,1},f_{i,2}, \pi_i)\}$. Then $G$ contains an A-trail if and only if each component grid contains an A-trail, and any A-trail $G$ contains is necessarily unknotted.
\end{theorem}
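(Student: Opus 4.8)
The plan is to combine the connected-sum decomposition of transition systems (Theorem \ref{thm:connsum}) with the fact that triangular torus grids are checkerboard-colorable, so that every A-trail in a component grid is unknotted and contained in a disk (by the results of Section 3). First I would handle the "only if" direction together with the unknottedness claim. Suppose $G$ contains an A-trail $T$. By Theorem \ref{thm:connsum}, $T = \#\mathbb{T}$ for some collection $\mathbb{T} = \{T_i\}$ of smooth transition systems of the component grids $G_i$, compatible with the gluing set. I would argue that each $T_i$ is in fact an A-trail of $G_i$: the circuit decomposition of $G$ determined by $T$ is a single closed curve, and cutting along each gluing cycle $C_i$ (undoing the connected-sum operation of Figure \ref{fig:comp_smooths}) decomposes $\mathcal{L}(T)$ into the pieces $\mathcal{L}(T_i)$ joined along the curves $C_i$; since $\mathcal{L}(T)$ is connected and the $C_i$ arrange the pieces in a linear (tree-like) pattern, a counting/parity argument forces each $\mathcal{L}(T_i)$ to be connected, i.e. each $T_i$ is an A-trail of $G_i$. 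This gives the forward implication. For unknottedness: each $G_i$ is checkerboard-colorable (being a triangular torus grid, by the discussion following Altshuler's representation), so by the Corollary to Theorem \ref{thm:covtree} the A-trail $\mathcal{L}(T_i)$ is contained in an open disk $D_i$ on $G_i$, and hence bounds a disk on $G_i$ that survives when $G$ is standardly embedded. The connected sum $\mathcal{L}(T)$ is then built by joining unknots, each bounding a disk, along arcs, and one checks (using that the gluing faces are disjoint from one another, i.e. $f_{i,2} \cap f_{i+1,1} = \emptyset$, so the disks $D_i$ can be taken disjoint from the gluing curves) that the result is an iterated connected sum of unknots, hence unknotted.

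For the "if" direction, assume each $G_i$ contains an A-trail. I would construct a compatible family $\mathbb{T}$ directly: the issue is that compatibility requires, for each gluing triple $(f_{i,1}, f_{i,2}, \pi_i)$, that $T_i$ use an $f_{i,1}$-transition at a vertex $v$ exactly when $T_{i+1}$ does \emph{not} use an $f_{i,2}$-transition at $\pi_i(v)$. Here I would use the checkerboard structure: the two smooth transitions at a vertex of $G_i$ are the red and blue smoothings, and the bijection of Theorem \ref{thm:covtree} (A-trails $\leftrightarrow$ covering trees in $G_r$ or $G_b$) gives enough flexibility to modify a given A-trail along a triangular face without destroying the A-trail property — indeed one can argue that for a triangular face $f$ of a checkerboard-colored grid, one of the two color classes realizes $f$ as bounding a face-vertex, and swapping the smoothings along $\partial f$ corresponds to adding or deleting that face-vertex from the covering tree, which (by the parity-of-face-vertices argument in Corollary \ref{cor:nec}) can be controlled. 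Iterating along the gluing faces $f_{1,1}, f_{1,2}, \dots$ one at a time, modifying $T_{i+1}$ to be compatible with the already-fixed $T_i$ on $f_{i,1}$, produces a compatible family; then $\#\mathbb{T}$ is a smooth transition system of $G$, and by the same connectivity argument as above (now run in reverse: connected components glued along the $C_i$ into a single curve) it is an A-trail of $G$.

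The main obstacle I expect is the "if" direction, specifically showing one can always choose the A-trails $T_i$ of the component grids so that the collection is compatible with the given gluing set — i.e., that demanding prescribed behavior on a single triangular face does not obstruct the existence of an A-trail. The cleanest route is through Theorem \ref{thm:covtree}: translate "is an A-trail with prescribed smoothings along $\partial f$" into "$G_r[T_b]$ or $G_b[T_r]$ is a covering tree containing (or avoiding) a specified face-vertex," and then verify that such covering trees exist whenever any A-trail does. A careful treatment of how the $G$-vertices on a gluing face interact with the covering-tree condition — including the degenerate cases where a gluing vertex is shared among several faces or where $f$ is a small face — will be where the real work lies; the connectivity bookkeeping in the "only if" direction is comparatively routine once the decomposition of $\mathcal{L}(T)$ along the curves $C_i$ is set up correctly.
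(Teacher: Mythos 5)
There is a genuine gap, and it sits exactly where you lean on a ``counting/parity argument.'' The fact that drives the paper's proof, and that your proposal never isolates, is a local one: at a \emph{triangular} gluing face there is only one pair of compatible transitions, and its connected sum meets the gluing cycle in exactly two points (Figure~\ref{fig:S_tri_join}). This is what guarantees that the gluing merges exactly one component of $T_i$ with exactly one component of $T_{i+1}$, so that connectivity of $\#\mathbb{T}$ forces each $T_i$ to be a single circuit, and also what gives $\mathcal{L}(\#\mathbb{T}) = \mathcal{L}(T_1)\#\cdots\#\mathcal{L}(T_n)$ as a genuine connected sum of knots. Without it, both of your key steps are unsupported: ``the whole is connected and the pieces are glued in a linear pattern, hence each piece is connected'' is false in general, and ``joining unknotted pieces along arcs yields an iterated connected sum of unknots'' is also false when a gluing cycle is crossed more than twice. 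The paper's own Figure~\ref{fig:fig_8} is a counterexample to both claims in the rectangular setting: a composite of two copies of $R_{4,4}$ carries an A-trail (so the glued object is connected) whose component transition systems are $2$-component links (so the pieces are not A-trails), and the resulting A-trail is a figure-eight knot even though everything in sight lies on standardly embedded tori. So your ``only if'' and unknottedness arguments must be rebuilt on top of the triangular-face enumeration; as written they would prove a statement that the rectangular case shows to be false.

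Your ``if'' direction also diverges from the paper and remains incomplete by your own admission. You propose to take arbitrary A-trails in the $G_i$ and modify them, via the covering-tree bijection of Theorem~\ref{thm:covtree}, to prescribe their behavior along the gluing triangles; the step ``swapping the smoothings along $\partial f$ corresponds to a controllable change of the covering tree'' is precisely the hard part and is not carried out (note that toggling the smoothing at a single vertex changes three edges of the red graph, and preserving the tree/covering conditions is not automatic). The paper avoids this entirely: it uses the explicit A-trails constructed in Theorem~\ref{thm:oddv}, which can be chosen directly so that each triple $(f_{i,1},f_{i,2},\pi_i)$ exhibits compatibility in the unique pattern of Figure~\ref{fig:S_tri_join}, and then concludes unknottedness from the connected-sum decomposition together with the fact (from checkerboard-colorability) that each component A-trail is unknotted. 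If you want to salvage your modification approach, you would need a lemma saying that whenever a triangular torus grid has an A-trail, it has one with the prescribed $f$-transition pattern on any given face; that is a real claim requiring proof, not bookkeeping.
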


\begin{proof} Suppose $G$ is a composite of triangular torus grids $\{G_i\}$ by $\{(f_{i,1},f_{i,2}, \pi_i)\}$. In Figure \ref{fig:S_tri_join} we provide the only pair of compatible transitions on faces of a triangular torus grid as well as their connected sum. Now suppose that each of these grids $G_i$ contains an A-trail. Using the construction given in Theorem \ref{thm:oddv}, we can choose smooth transition systems $T_i$ and $T_{i+1}$ such that $(f_{i,1},f_{i,2},\pi)$ exhibit compatibility of $T_i$ and $T_{i+1}$. Observe that in the connected sum of these transitions, the link $\mathcal{L}(\# \mathbb{T})$ crosses the cycle formed by gluing $f_{i,1}$ to $f_{i,2}$ exactly twice. Thus, $\mathcal{L}(\# \mathbb{T}) = \mathcal{L}(T_1) \# \cdots \# \mathcal{L}(T_n)$. Since the connected sum of unknots is the unknot, $\# \mathbb{T}$ is unknotted. Moreover, since each component transition system $T_i$ is an A-trail, we see that $\# \mathbb{T}$ contains every edge of $G$ and hence is an A-trail of $G$.

Now suppose $G$ contains an A-trail $T$. By Theorem \ref{thm:connsum}, $T = \#\mathbb{T}$ for some set $\mathbb{T} = \{T_i\}$ of smooth transition systems of the component grids. Consider some component transition system $T_i$. If $T_i$ is not an A-trail, then it has at least two distinct components $C_1$ and $C_2$. Note that when we take the connected sum of transition systems at a triangular face, exactly one component of one of the grids is connected to exactly one component of the other grid. Thus, since $\#\mathbb{T}$ crosses a pair of glued faces exactly twice, $C_1$ and $C_2$ are contained in different components of $\#\mathbb{T}$, a contradiction. Thus, each $T_i$ is an A-trail.
\end{proof}

\begin{figure}[ht!]
    \centering
    \begin{subfigure}{\columnwidth}
        \centering
        \includegraphics[scale=.3]{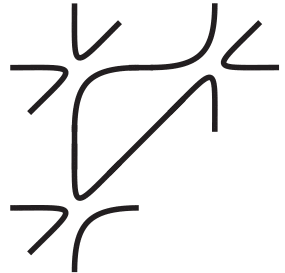} \qquad
        \includegraphics[scale=.3]{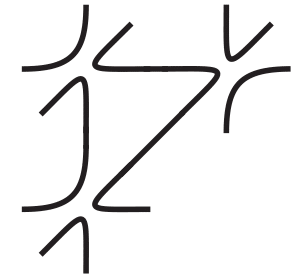}
        \caption{The only pair of compatible transitions.}
        \label{subfig:S_tri}
    \end{subfigure}
    
    \begin{subfigure}{\columnwidth}
        \centering
        \includegraphics[scale=.3]{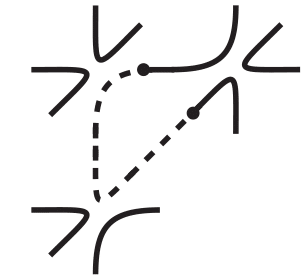} \qquad
        \includegraphics[scale=.3]{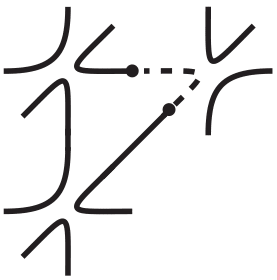}
        \caption{Their connected sum.}
        \label{subfig:S_tri_join}
    \end{subfigure}
    \caption{Connected sums of transitions in composite triangular grids.}
    \label{fig:S_tri_join}
\end{figure}

Theorem \ref{thm:comptri} does not duplicate for rectangular grids, as there are more pairs of compatible faces between these grids, some of which cross the glued face-boundaries more than twice (see Figure \ref{fig:S_ABC}). Indeed, in Figure \ref{fig:fig_8} we give an A-trail on the double torus, a figure-8 knot in fact, whose component smooth transition systems are two-component links. However, we can still construct infinitely many composite rectangular grids of any genus having unknotted A-trails.

\begin{figure}[ht!]
\centering
\includegraphics[scale=0.4]{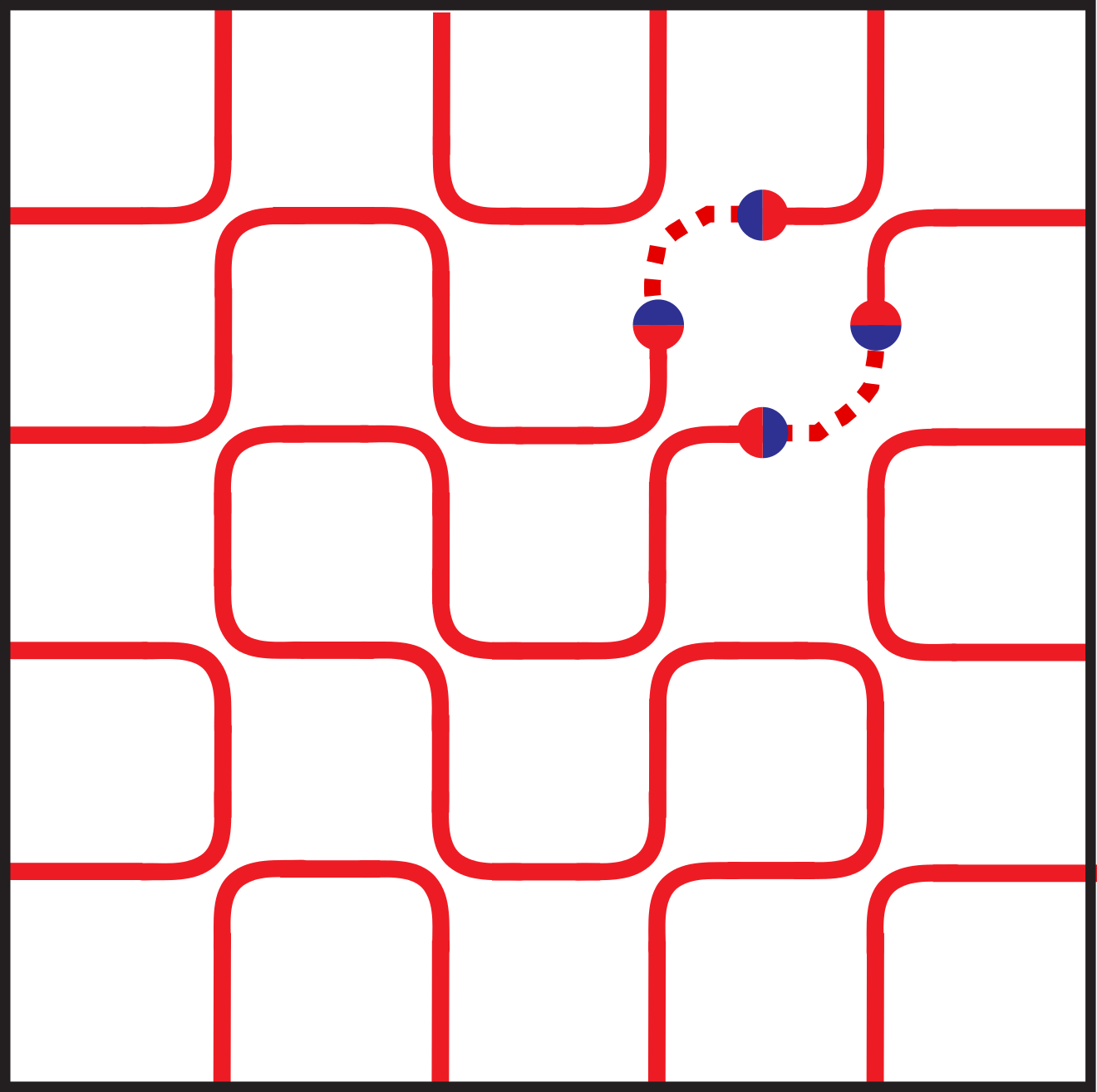} \qquad \qquad
    \includegraphics[scale=0.4]{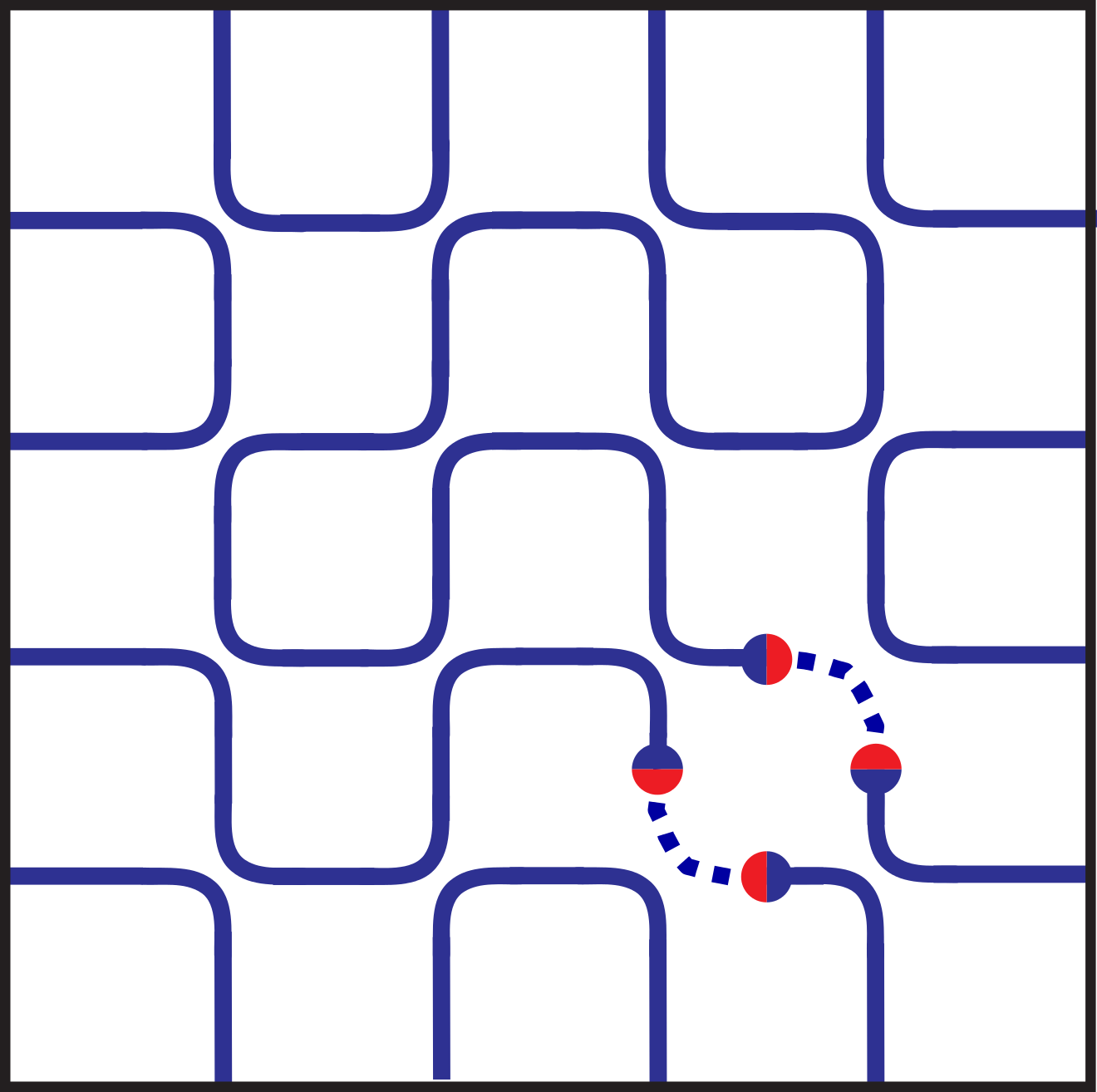}
    \caption{An A-trail on a composite grid of $\{R_{4,4}, R_{4,4}\}$. The A-trail forms a figure 8-knot, the component smooth transition systems each form a $2$-component link.}
    \label{fig:fig_8}
\end{figure}
\begin{theorem}For any $n$, there exist infinitely many composite rectangular grids of genus $n$ containing unknotted A-trails.
\end{theorem}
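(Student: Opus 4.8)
The plan is to build, for each genus $n$, an infinite family of composite rectangular grids whose A-trails we can control directly by hand, avoiding the obstruction illustrated in Figure \ref{fig:S_ABC}. The key observation is that Theorem \ref{thm:comptri} for triangular grids worked because the \emph{only} pair of compatible transitions on a triangular face crosses the glued cycle exactly twice, forcing the link of the composite to be a connected sum of the component links. To replicate this for rectangular grids, I would first identify a \emph{specific} compatible pair of smooth transition systems on a pair of square faces of $R_{i,j}$ whose connected sum crosses the glued cycle exactly twice. Figure \ref{fig:S_ABC} shows that not every compatible pair does this, but we are free to choose the faces and the transition systems; on a $4$-cycle face, the pair in which one grid has $f$-transitions at two opposite vertices of the face and non-$f$-transitions at the other two, with the other grid complementary, has exactly this property, since then the local picture near the glued cycle is the same ``two strands passing through'' configuration as in Figure \ref{fig:S_tri_join}.

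Next, I would produce, for infinitely many pairs $(i,j)$ (say $i$ odd, $j$ fixed, or both even), an \emph{unknotted} A-trail $T$ on $R_{i,j}$ that realizes the required transition pattern on a chosen cyclic face $f$ --- i.e. that is an $f$-transition at exactly two opposite vertices of $f$ and a non-$f$-transition at the other two. The unknotted A-trails of Theorem \ref{thm:unknot_rect} (Figure \ref{fig:unknot}) can be examined face-by-face: I would choose $f$ to be a face away from where the ``break'' in the weaving pattern occurs and check that, on at least one such face, the A-trail restricts to the desired $2$-in/$2$-out pattern, possibly after a small local modification that keeps it an unknotted A-trail. Because each $R_{i,j}$ has many faces and the pattern is locally flexible, this should be arrangeable; moreover one needs both a face playing the role of $f_{i,1}$ and a disjoint face playing the role of $f_{i,2}$ with the \emph{opposite} pattern, which is why one wants the weaving pattern to be periodic enough to offer both.

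Then the construction is routine: given $n$, pick any such $R_{i_1,j_1}, \dots, R_{i_n,j_n}$ with matching faces, form the composite $G$ by gluing $f_{k,1}$ to $f_{k,2}$ via the cycle isomorphisms, and let $\mathbb{T} = \{T_k\}$ be the chosen compatible unknotted A-trails. By Theorem \ref{thm:connsum} (run in the forward direction) $\#\mathbb{T}$ is a smooth transition system on $G$; since each $T_k$ is an A-trail, $\#\mathbb{T}$ meets every edge and hence is an A-trail of $G$. Since each connected-sum vertex was set up so that $\mathcal{L}(\#\mathbb{T})$ crosses each glued cycle exactly twice, the same argument as in Theorem \ref{thm:comptri} gives $\mathcal{L}(\#\mathbb{T}) = \mathcal{L}(T_1) \# \cdots \# \mathcal{L}(T_n)$, a connected sum of unknots, hence unknotted. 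Varying $i_1$ (or $j_1$) over infinitely many admissible values while keeping the rest fixed yields infinitely many non-isomorphic composite grids of genus $n$, completing the proof.

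The main obstacle I anticipate is the second step: verifying that the explicit unknotted A-trails of Theorem \ref{thm:unknot_rect} (or nearby ones) genuinely contain a cyclic face on which the trail has the ``two opposite $f$-transitions'' pattern, \emph{and} a disjoint cyclic face with the complementary pattern, simultaneously for an infinite family of $(i,j)$. This is a finite local check per grid, but making it uniform across an infinite family --- and ensuring the local modifications preserve both the A-trail property and unknottedness --- is where the real work lies; everything downstream is a direct application of Theorems \ref{thm:connsum} and \ref{thm:comptri}-style reasoning about the connected sum of links.
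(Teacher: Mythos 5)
Your overall architecture is the same as the paper's: glue unknotted A-trails from Theorem \ref{thm:unknot_rect} along faces where the transition systems form a compatible pair whose connected sum crosses the glued cycle exactly twice, conclude that $\mathcal{L}(\#\mathbb{T})=\mathcal{L}(T_1)\#\cdots\#\mathcal{L}(T_n)$ is a connected sum of unknots, and vary the dimensions to get infinitely many composites of each genus. However, the one local choice on which the argument hinges is made incorrectly. If $T_1$ has $f$-transitions at two \emph{opposite} vertices of the square face and $T_2$ is complementary, then the grid to which the half-edges of the glued cycle $C$ get attached in $t_1\#t_2$ alternates as you go around the four vertices of $C$: at each vertex the $C$ half-edges are paired with the outside half-edges of whichever grid carries the non-$f$-transition there. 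Consequently every one of the four edges of $C$ carries a strand with one end running into $G_i$ and the other into $G_{i+1}$, so $\mathcal{L}(\#\mathbb{T})$ meets the separating sphere in four points, not two, and the connected-sum decomposition of the link does not follow. This is precisely the configuration behind the paper's Figure \ref{fig:fig_8}, where two $2$-component smooth circuit decompositions glue to a single figure-eight knot; with four crossings the composite link can fail to be $\mathcal{L}(T_1)\#\mathcal{L}(T_2)$, and indeed could not even be connected from your data if it were a genuine two-point connected sum of single circuits being joined more than once.

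The repair is to take a pattern in which the vertices where one grid has the $f$-transition are consecutive along the face (three-and-one, or two adjacent): then only two edges of $C$ are ``mixed,'' the two-strands-passing-through picture you wanted really does hold, and the rest of your argument goes through verbatim. This is the paper's transition set $S_A$ of Figure \ref{fig:S_ABC}, which is exactly what its proof uses. The remaining issue you flag yourself --- verifying that the unknotted A-trails of Theorem \ref{thm:unknot_rect} can be chosen, or locally modified without losing unknottedness, so as to exhibit the required pattern on two disjoint cyclic faces uniformly in $(i,j)$ --- is genuine, but the paper asserts this step with essentially the same brevity, so apart from the wrong choice of compatible pair your proposal matches the published proof.
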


\begin{proof} In Figure \ref{fig:S_ABC} we give a list of all pairs of compatible transitions at rectangular faces and their corresponding connected sums. Using the unknotted A-trails of Theorem \ref{thm:unknot_rect}, we can find a set $\mathbb{T}$ of transition systems on rectangular grids $\{G_i\}$ of arbitrary dimensions inducing an unknotted A-trail. Moreover, we can choose these transition systems such that there exist faces $f_{i,1}$ and $f_{i,2}$ in $G_i$ and $G_{i+1}$ and isomorphisms $\pi: \partial f_{i,1} \to f_{i,2}$ such that $T_i$ and $T_{i+1}$ form a pair of type $S_A$ from Figure \ref{fig:S_ABC} with respect to $(f_{i,1},f_{i,2},\pi_i)$. Observe that in the connected sum of the transition set $S_A$, the link $\mathcal{L}(\# \mathbb{T})$ crosses the cycle formed by gluing $f_{i,1}$ to $f_{i,2}$ exactly twice. Thus, $\mathcal{L}(\# \mathbb{T}) = \mathcal{L}(T_1) \# \cdots \# \mathcal{L}(T_n)$. Since the connected sum of unknots is the unknot, $\# \mathbb{T}$ is unknotted. Moreover, since each component transition system $T_i$ is an A-trail, we see that $\# \mathbb{T}$ contains every edge of $G$ and hence is an A-trail of $G$.
\end{proof}

\begin{figure}[ht!]
    \centering
    \begin{subfigure}[b]{0.5\columnwidth}
        \centering
        \includegraphics[scale=0.15]{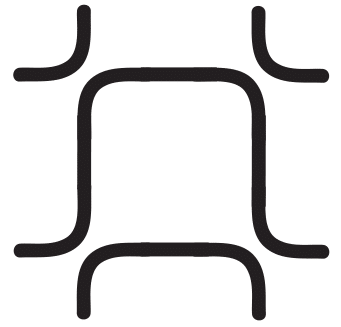}\qquad
        \includegraphics[scale=0.15]{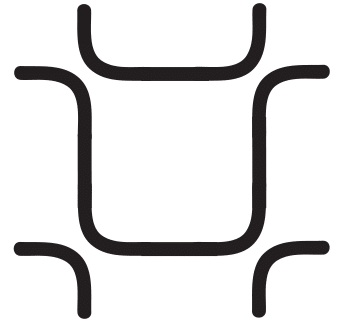}
        \caption{Transition set $S_A$.}
        \label{subfig:S_A}
    \end{subfigure}%
    \begin{subfigure}[b]{0.5\columnwidth}
        \centering
        \includegraphics[scale=0.15]{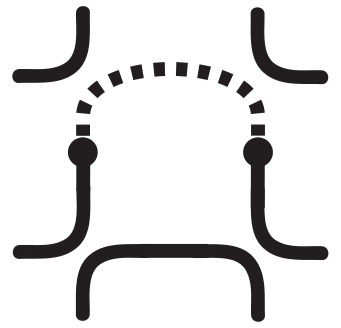}\qquad
        \includegraphics[scale=0.15]{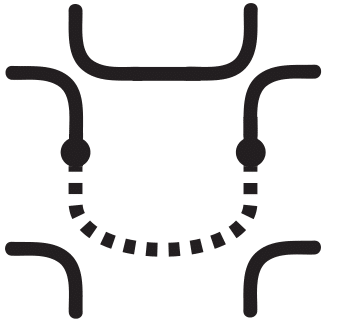}
        \caption{The connected sum of $S_A$.}
        \label{subfig:S_A_join}
    \end{subfigure}
    
    \begin{subfigure}[b]{0.5\columnwidth}
        \centering
        \includegraphics[scale=0.15]{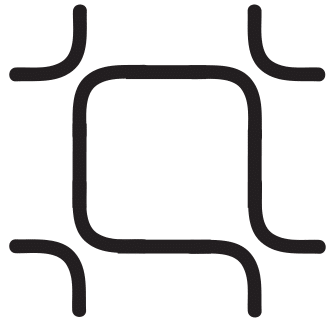}\qquad
        \includegraphics[scale=0.15]{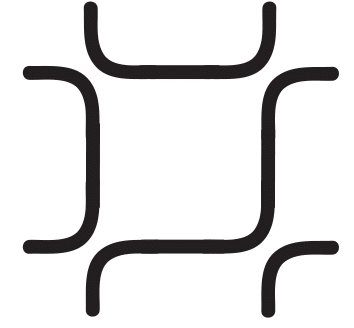}
        \caption{Transition set $S_B$.}
        \label{subfig:S_B}
    \end{subfigure}%
    \begin{subfigure}[b]{0.5\columnwidth}
        \centering
        \includegraphics[scale=0.15]{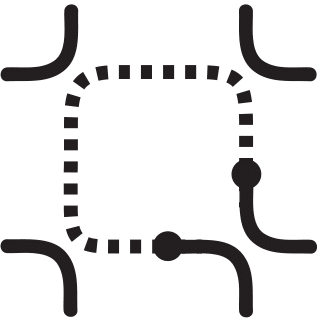}\qquad
        \includegraphics[scale=0.15]{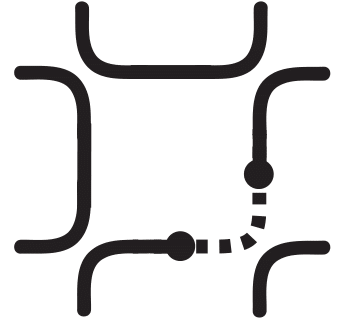}
        \caption{The connected sum of $S_B$.}
        \label{subfig:S_B_join}
    \end{subfigure}
    
    \begin{subfigure}[b]{0.5\columnwidth}
        \centering
        \includegraphics[scale=0.15]{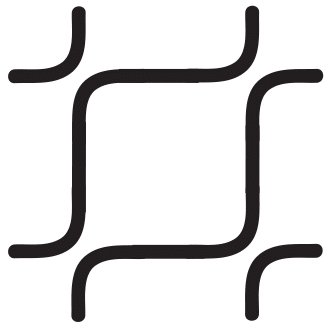}\qquad
        \includegraphics[scale=0.15]{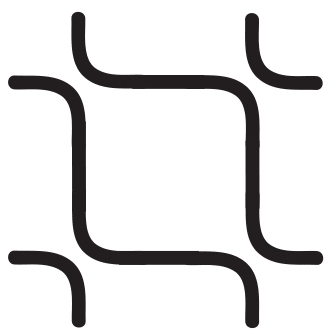}
        \caption{Transition set $S_C$.}
        \label{subfig:S_C}
    \end{subfigure}%
    \begin{subfigure}[b]{0.5\columnwidth}
        \centering
        \includegraphics[scale=0.15]{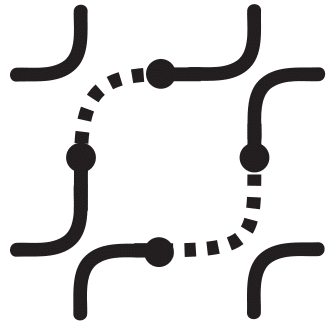}\qquad
        \includegraphics[scale=0.15]{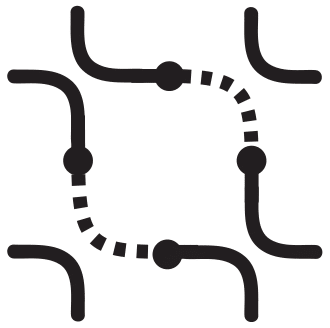}
        \caption{The connected sum of $S_C$.}
        \label{subfig:S_C_join}
    \end{subfigure}
    
    \caption{Pairs of compatible transitions and their connected sums.}
    \label{fig:S_ABC}
\end{figure}

\section{Conclusion} 

When the target of DNA origami self-assembly is modeled as a graph embedded on a surface, an optimal route for the circular (unknotted) scaffolding strand of DNA corresponds to an A-trail of the embedded graph. Thus, designing such DNA nanostructures using the origami method requires that the target embedding contain an unknotted A-trail. Note that the existence of an A-trail is a property only of the surface embedding, but the knottedness of the A-trail depends (in general) on the spatial embedding of the surface, complicating the analysis.

In terms of the motivating application, we have shown that if the target structure is checkerboard-colorable and toroidal, the embedding of the torus in space does not affect knottedness of any A-trails. Targets of DNA nanostructure self-assembly are often surface meshes, and we have characterized those non-SAH triangular torus grids as well as the rectangular torus grids $R_{i,j}$ containing unknotted A-trails. On surfaces other than the torus, we have constructed infinite families of triangular and rectangular meshes containing unknotted A-trails.

In addition to the practical application, we believe that the study of knotted A-trails (or linked smooth circuit decompositions) is of purely theoretical interest, as it provides a new perspective on what knotting means for embedded (Eulerian) graphs. With this in mind, we have shown that torus links can all be constructed from A-trails of rectangular grids, and that individual rectangular grids contain A-trails forming many different torus knots and links. 

We close by mentioning some open problems.

\begin{enumerate}
\item The following question was posed to the authors by Lou Kauffman: Let $K$ be a knot (or perhaps link) embedded on the surface $\Sigma$. Is there a rectangular grid $R$ on $\Sigma$ with an A-trail isotopic to $K$? What about links? We have shown that the answer to this question is \emph{yes} for the torus in Theorem \ref{thm:alltoruslinks}. Connected sums of torus knots can be constructed using connected sums of transition systems, and we have shown how to obtain on the double torus a Figure 8 knot, which is not a connected sum of torus knots, in Figure \ref{fig:fig_8}. However, the question is open in general.
\item We have shown that embedded graphs exist for which every A-trail is knotted. However, this example is in some sense trivial: viewed in $\mathbb{R}^3$, we have simply tied a knot in one loop of a two-loop graph. Are there nontrivial examples (and what, exactly, does ``trivial'' mean)?
\end{enumerate}

\section{Acknowledgements} The authors were supported by the National Science Foundation (NSF) under grant DMS-1001408. We would also like to thank Lou Kauffman and Joel Foisy for their insights at various stages of this research, and Ned Seeman for sharing his experience in DNA self-assembly for the problem formulation.

\bibliographystyle{abbrv}
\bibliography{Bibliography}

\end{document}